\theoremstyle{plain}
\newtheorem{thm}{Theorem}[section]
\newtheorem{prop}[thm]{Proposition}
\newtheorem{lemma}[thm]{Lemma}
\theoremstyle{remark}
\theoremstyle{definition}
\newtheorem{rem}[thm]{Remark}
\newtheorem{rems}[thm]{Remarks}
\newtheorem{remdef}[thm]{Remark-Definition}
\newtheorem{remsdefs}[thm]{Remarks-Definitions}
\newtheorem{defi}[thm]{Definition}
\newtheorem{notas}[thm]{Notations}
\title[Isometries of some compact Lie groups]{The full group of isometries of some compact Lie groups endowed with a bi-invariant metric}
\author{Alberto Dolcetti \and Donato Pertici}
\begin{document}

\parindent 0pt
\selectlanguage{english}

\maketitle

\vspace*{-0.2in}

\begin{center}
{\scriptsize Dipartimento di Matematica  e Informatica, Viale Morgagni 67/a, 50134 Firenze, ITALIA

\vspace*{0.07in}

alberto.dolcetti@unifi.it, \  http://orcid.org/0000-0001-9791-8122

\vspace*{-0.03in}

donato.pertici@unifi.it,  \   http://orcid.org/0000-0003-4667-9568}

\end{center}


\vspace*{-0.1in}

\begin{abstract}
We describe the full group of isometries of absolutely simple, compact, connected real Lie groups,  of $S\mathcal{O}(4)$ and of $U(n)$, endowed with suitable bi-invariant Riemannian metrics.
\end{abstract}


{\small \tableofcontents}

\renewcommand{\thefootnote}{\fnsymbol{footnote}}

\renewcommand{\thefootnote}{\arabic{footnote}}
\setcounter{footnote}{0}

\vspace*{-0.3in}

{\small {\scshape{Keywords.}} (absolutely simple, compact, connected) real Lie group, Lie algebra, Killing metric, Frobenius metric, (special) orthogonal group, compact symplectic group, (special) unitary group. 

\smallskip

{\small {\scshape{Mathematics~Subject~Classification~(2020):}} 53C35, 22E15.

{\small {\scshape{Grants:}}
This research has been partially supported by GNSAGA-INdAM (Italy).

\bigskip

\section*{Introduction}\label{intro}

In this paper we describe the full group of isometries of some classes of \emph{real Lie groups}, endowed with suitable bi-invariant Riemannian metrics: the \emph{Killing metric} both on any \emph{ absolutely simple,}\footnote{In the present paper, a real Lie group is said to be \emph{absolutely simple} if the complexification of its Lie algebra is a simple Lie algebra.} \emph{compact, connected} Lie group and on the \emph{special orthogonal group} $S\mathcal{O}(4)$, and also the metric induced, on the \emph{unitary group} $U(n)$, by the flat Frobenius metric of $M_n(\mathbb{C})$.

In \cite{DoPe2015} and in \cite{DoPe2018} we have already studied  another relevant example of (semi-Riemannian) metric: the so-called trace metric, which is bi-invariant on $GL_n(\mathbb{R})$ and on its Lie subgroups. Some techniques, used in the present paper, have been developed in those papers and in \cite{DoPe2019}, \cite{DoPe2020}, \cite{DoPe2021}.

\smallskip

Given any Lie group $G$, the Killing form of its Lie algebra extends, on the whole $G$, to a bi-invariant symmetric $(0, 2)$-tensor, denoted by $\mathcal{K}$, and called \emph{Killing tensor} of $G$. 

Some further properties of $G$ give some relevant consequences. For instance, as well-known, $G$ is semi-simple if and only if $\mathcal{K}$ (and also $-\mathcal{K}$) is a semi-Riemannian metric on $G$ (Cartan's criterion) and, if $G$ is semi-simple and compact, then the tensor $-\mathcal{K}$ is a Riemannian metric on $G$, we call \emph{Killing metric} of $G$.  Furthermore, if $G$ is connected, compact and simple, then $(G, - \mathcal{K})$ is a globally symmetric Riemannian manifold with non-negative sectional curvature and, moreover, if $G$ is also \emph{absolutely simple}, then $(G, - \mathcal{K})$ is an Einstein manifold.
The Killing tensor of $G$ is more than an example of bi-invariant tensor on $G$, indeed if $G$ is connected and absolutely simple, then every bi-invariant real $(0,2)$-tensor on $G$ is a constant multiple of $\mathcal{K}$. 
These results are essentially matter of Section \ref{preliminari}.

\smallskip

Section \ref{ris-generali} is devoted to the general result of this paper:

\smallskip

{\bf Theorem \ref{teor-gen}} Let $G$ be a absolutely simple, compact, connected real Lie group and
let $- \mathcal{K}$ be its Killing metric.
Then $F: (G, -\mathcal{K}) \to (G, -\mathcal{K})$ is an isometry if and only if 
there exist an element $a \in G$ and an automorphism $\Phi$ of the Lie group $G$ such that
either $F = L_a \circ \Phi$ or $F = L_a \circ \Phi \circ j$, where $L_a$ is the left translation associated to $a$ and $j$ is the inversion map.

\smallskip

Many classical groups satisfy all conditions of the above Theorem, precisely:
the \emph{special orthogonal groups} $S \mathcal{O}(n)$, with $n \ge 3$ and $n \ne 4$,
the \emph{special unitary groups} $SU(n)$, with $n \ge 2$,
the \emph{compact symplectic groups} $Sp(n)$, with $n \ge 1$.

A careful analysis of the automorphisms of each group allows us to deduce the complete lists of the isometries of $(G, -\mathcal{K})$, where $G$ is one of the previous classical groups, (Theorem \ref{teor-gen-n}).

\smallskip

The manifold $(S\mathcal{O}(4), -\mathcal{K})$ is not included in the previous result:  indeed $S\mathcal{O}(4)$ is semi-simple but not simple, however $-\mathcal{K}$ is still a Riemannian metric on it. Section \ref{caso-SO4} is devoted to this particular case. The key points are the following: $(S\mathcal{O}(4), -\mathcal{K})$ is isometric to the Lie group $\dfrac{SU(2) \times SU(2)}{\{ \pm (I_2, I_2)\}}$ (endowed with its Killing metric) and the natural covering projection of  $SU(2) \times SU(2)$ (endowed with the product of the Killing metrics) onto the previous quotient, is clearly a local isometry.  All isometries of $SU(2) \times SU(2)$ are obtained by means of the analysis of Section \ref{ris-generali}, via a classical result of De Rham. Since these ones project as isometries of the quotient, we can obtain the main result of the section:

\smallskip

{\bf Theorem. \ref{isom-SO-4}} 
The isometries of $(S \mathcal{O}(4), - \mathcal{K})$ are precisely the following maps: 

$X \to AXB$, \ \ 
$X \to AX^{T}B$, \ \ 
$X \to A \, \tau(X)B$,  \ \ 
$X \to A \, \tau(X)^{T}B$,  \ \ 
where $A,B$ are matrices both in $S\mathcal{O}(4)$ or both in $\mathcal{O}(4) \setminus S\mathcal{O}(4)$ (and $\tau$ is a suitable map constructed by means of the \emph{Cayley's factorization} of $S \mathcal{O}(4)$).

\smallskip

Finally, Section \ref{caso-U-n} is devoted to $U(n)$, endowed with the bi-invariant Riemannian metric $\phi$, restriction to $U(n)$ of the flat Frobenius metric of $M_n(\mathbb{C})$. This metric is not multiple of the Killing tensor, because $U(n)$ is not semi-simple (and so its Killing tensor is degenerate). Analogously to Section \ref{caso-SO4}, we get a covering map (which is also a local isometry) from $SU(n) \times \mathbb{R}$ (endowed with a suitable product metric) onto $(U(n), \phi)$. This allows to get the following main results of this Section, with a difference between the cases $n=2$ and $n \ge 3$, due to the fact that all isometries of $SU(2) \times \mathbb{R}$ project as isometries of $(U(2), \phi)$, whereas, for $n \ge 3$, $SU(n) \times \mathbb{R}$ has more types of isometries, not all projecting as maps of $U(n)$.

\smallskip

{\bf Theorem \ref{isom-U-2}.} The isometries of $(U(2), \phi)$ are precisely the following maps:

$X \to AXB$, \ \ 
$X \to AX^*B$, \ \ 
$X \mapsto \dfrac{AXB}{\det(X)}$, \ \ 
$X \to\det(X) AX^*B$, \ \ 
with $A,B \in U(2)$.

\smallskip

{\bf Theorem \ref{isom-U-n}.} The isometries of $(U (n), \phi)$, with $n \ge 3$,  are precisely the following maps: 

$X \to AXB$, \ \ 
$X \to AX^{*}B$, \ \ 
$X \to A\overline{X}B$,\ \ 
$X \to AX^{T}B$, \ \ 
with $A, B \in U(n)$.

\bigskip

{\bf Acknowledgement.} We wish to express our gratitude to Fabio Podest\`{a} for his help and for many discussions about the matter of this paper.

\section{Notations and preliminary facts.}\label{preliminari}

\begin{notas}\label{notazioni} \ \\
In this paper we will use many standard notations from the matrix theory, which should be clear from the context, among these: $M_n(\mathbb{R})$ for the vector space of real square matrices, $\mathcal{O}(n)$ for the group of real orthogonal matrices, $S\mathcal{O}(n)$ for the the group of real special orthogonal matrices, $Sp(n)$ for the compact symplectic group, $M_n{(\mathbb{C})}$ for the vector space of complex square matrices, $U(n)$ for the group of  unitary matrices, $SU(n)$ for the group of special unitary matrices (all matrices of order $n$).
If $A$ is a matrix, then $A^T$, $A^{-1}$, $\overline{A}$, $A^{*} := \overline{A}^T$ denote its transpose, its inverse (when it exists), its conjugate and its transpose conjugate. $I_n$ is the identity matrix of order $n$ and ${\bf i} \in \mathbb{C}$ is the unit imaginary number.

\smallskip

The basic notations and notions on real Lie groups and algebras are the following:
\begin{itemize}
\item  $G$ is a real Lie group with identity $e$ and inversion map $j: x \mapsto x^{-1}$, $\mathfrak{g}$ is its Lie algebra (identified with its tangent space at $e$), $\exp: \mathfrak{g} \to G$ is the \emph{exponential map} and $Aut(G)$ denotes the Lie group of all (smooth) automorphisms of $G$;

\item if $\mathfrak{g}$ is a real Lie algebra, $\mathfrak{g}^{\mathbb{C}}:= \mathfrak{g} \oplus \mathbf{i} \mathfrak{g} = \mathfrak{g} \otimes_{\mathbb{R}} \mathbb{C}$ will denote its \emph{complexification}, which turns to be a complex Lie algebra, having $\mathfrak{g}$ as real subalgebra;

\item if $\mathfrak{h}$ is a complex Lie algebra, $\mathfrak{h}^{\mathbb{R}}$ will denote its \emph{realification}, i. e. $\mathfrak{h}^{\mathbb{R}}$ is simply $\mathfrak{h}$, regarded as real Lie algebra;

\item for every $a \in G$, $L_a$ and $R_a$ are, respectively, the \emph{left and right translations} in $G$ associated to $a$ and $C_a := L_a \circ R_{a^{-1}}$ is the \emph{inner automorphism} of $G$ associated to $a$;

\item for every $a \in G$, $Ad_a$ is the automorphism of $\mathfrak{g}$, defined as the differential at $e$ of $C_a$. It is well-known that $\exp \circ Ad_a = C_a \circ \exp$;

\item $\mathcal{K}$ is the left-invariant symmetric $(0, 2)$-tensor on the whole $G$, extending the \emph{Killing form} of $\mathfrak{g}$, so that the Killing form of $\mathfrak{g}$ agrees with $\mathcal{K}_e$. We call $\mathcal{K}$ the \emph{Killing tensor} of the Lie group $G$.
\end{itemize}
\end{notas}

\begin{lemma}\label{prime-isom}
The Killing tensor $\mathcal{K}$ of the Lie group $G$ is bi-invariant on $G$ and it is preserved by every $\phi \in Aut(G)$ and by the inversion map $j$ (i. e. $\phi^* (\mathcal{K}) = \mathcal{K}$ and $j^* (\mathcal{K}) = \mathcal{K}$).
\end{lemma}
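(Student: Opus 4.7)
The plan is to reduce everything to properties of the Killing form $B = \mathcal{K}_e$ on $\mathfrak{g}$, exploiting the fact that $\mathcal{K}$ is left-invariant by construction. The one tool I would cite at the outset is the standard algebraic fact that for any Lie algebra automorphism $\psi$ of $\mathfrak{g}$, one has $\mathrm{ad}_{\psi X} = \psi \circ \mathrm{ad}_X \circ \psi^{-1}$, whence
\[
B(\psi X, \psi Y) = \mathrm{tr}(\mathrm{ad}_{\psi X} \mathrm{ad}_{\psi Y}) = \mathrm{tr}(\psi \, \mathrm{ad}_X \mathrm{ad}_Y \, \psi^{-1}) = B(X,Y).
\]
So $B$ is invariant under every automorphism of $\mathfrak{g}$.

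For bi-invariance, since $\mathcal{K}$ is already left-invariant, I would argue that a left-invariant $(0,2)$-tensor is bi-invariant if and only if its value at $e$ is $\mathrm{Ad}$-invariant; then apply the algebraic fact to $\psi = \mathrm{Ad}_g$, which is an automorphism of $\mathfrak{g}$ for every $g \in G$.

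For the invariance under $\phi \in Aut(G)$, I would use the intertwining relation $\phi \circ L_g = L_{\phi(g)} \circ \phi$ (which comes from $\phi$ being a group homomorphism) together with $\phi(g^{-1}) = \phi(g)^{-1}$. Differentiating gives $dL_{\phi(g)^{-1}} \circ d\phi_g = d\phi_e \circ dL_{g^{-1}}$, and then for $X, Y \in T_g G$,
\[
(\phi^{*}\mathcal{K})_g(X,Y) = B\bigl(dL_{\phi(g)^{-1}} d\phi_g X, dL_{\phi(g)^{-1}} d\phi_g Y\bigr) = B\bigl(d\phi_e dL_{g^{-1}} X, d\phi_e dL_{g^{-1}} Y\bigr),
\]
which by the invariance of $B$ under the Lie algebra automorphism $d\phi_e$ equals $B(dL_{g^{-1}} X, dL_{g^{-1}} Y) = \mathcal{K}_g(X,Y)$.

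For the inversion $j$, I would proceed more conceptually: first, $j^{*}\mathcal{K}$ is itself bi-invariant, since $j \circ L_a = R_{a^{-1}} \circ j$ and $j \circ R_a = L_{a^{-1}} \circ j$ together with the already established bi-invariance of $\mathcal{K}$ give, e.g., $L_a^{*}(j^{*}\mathcal{K}) = j^{*}(R_{a^{-1}}^{*}\mathcal{K}) = j^{*}\mathcal{K}$. Two bi-invariant tensors that agree at one point agree everywhere, so it suffices to compare $j^{*}\mathcal{K}$ and $\mathcal{K}$ at $e$; since $dj_e = -\mathrm{id}_{\mathfrak{g}}$, bilinearity of $B$ yields $(j^{*}\mathcal{K})_e(X,Y) = B(-X,-Y) = B(X,Y) = \mathcal{K}_e(X,Y)$.

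There is no substantial obstacle here: the whole argument is standard and essentially mechanical once one remembers the algebraic fact $B(\psi X,\psi Y)=B(X,Y)$ for Lie algebra automorphisms $\psi$. The only point requiring a small care is not to conflate the left-invariant extension procedure with the right-invariant one when verifying the chain rule computations above.
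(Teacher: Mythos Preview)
Your argument is correct and follows essentially the same line as the paper: reduce everything to the invariance of the Killing form $\mathcal{K}_e$ under Lie algebra automorphisms, then specialise to $\mathrm{Ad}_g$ for bi-invariance and to $d\phi_e$ for $\phi\in Aut(G)$. The only difference is that for the inversion $j$ the paper simply cites Helgason, whereas you supply the self-contained observation that $j^*\mathcal{K}$ is bi-invariant (via $j\circ L_a=R_{a^{-1}}\circ j$) and then use $dj_e=-\mathrm{id}$ to compare at $e$; this is exactly the computation underlying the cited passage, so the approaches coincide.
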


\begin{proof}
$\mathcal{K}_e$ is invariant with respect to all automorphisms of $\mathfrak{g}$, hence the left-invariant tensor $\mathcal{K}$ is preserved by all smooth automorphisms of $G$ (in particular by all inner automorphisms) and so $\mathcal{K}$ is right-invariant too.  For the assertion on $j$ see, for instance,  \cite[pp.\,147-148]{Helg1978}.
\end{proof}

\begin{remsdefs}
We say that a (finite dimensional) Lie algebra $\mathfrak{g}$ is \emph{simple}, if it is non-abelian and  has no ideals except $0$ and $\mathfrak{g}$; while we say that $\mathfrak{g}$ is \emph{semi-simple}, if it splits into the direct sum of simple Lie algebras; by the well-known Cartan's criterion, $\mathfrak{g}$ is semi-simple if and only if its Killing form is non-degenerate (see for instance \cite{SSLie}).

A Lie group is said to be \emph{simple} (respectively \emph{semi-simple}), if its Lie algebra is simple (respectively semi-simple). Hence a simple Lie group is semi-simple too.

Note that if $G$ is a semi-simple Lie group, then $(G, \mathcal{K})$ and $(G, -\mathcal{K})$ are \emph{semi-Riemannian} manifolds. We refer to $-\mathcal{K}$ (the opposite of the Killing tensor $\mathcal{K}$) as the \emph{Killing metric} of the (semi-simple) Lie group.
\end{remsdefs}

\begin{prop}\label{propr-gen}
Let $G$ be a semi-simple connected Lie group. Then

a) the geodesics of the semi-Riemannian manifold $(G, -\mathcal{K})$ are precisely the curves of the form $t \mapsto x \exp(tv)$, for every  $t \in \mathbb{R}$, with $x$ generic in $G$ and $v$ generic in the Lie algebra $\mathfrak{g}$ of $G$ (so $(G, -\mathcal{K})$ is \emph{geodesically complete}); 

b) the Levi-Civita connection $\nabla$ of $(G, -\mathcal{K})$ is the $0$-connection of Cartan-Schouten, defined by $\nabla_X (Y) :=\dfrac{1}{2}[X, Y]$, where $X$ and $Y$ are left-invariant vector fields on $G$;

c) the curvature tensor of type $(1, 3)$ of $(G, -\mathcal{K})$ is  

$R_{XY}Z:= \nabla_{[X, Y]}Z - [\nabla_X, \nabla_Y] Z = \dfrac{1}{4}[[X,Y], Z]$, 

where $X$, $Y$, $Z$ are left-invariant vector fields on $G$; 

d) the curvature tensor of type $(0, 4)$ of $(G, -\mathcal{K})$ is the bi-invariant tensor, defined by 

$R_{XYZW}:= -\mathcal{K}(R_{XY}Z, W) =-\dfrac{1}{4}\mathcal{K}([X,Y], [Z, W])$, 

where $X$, $Y$, $Z$, $W$ are left-invariant vector fields on $G$.
\end{prop}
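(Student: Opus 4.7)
The strategy is to prove (b) first, and then deduce (a), (c), (d) from it by standard maneuvers. For (b), I would define the $\mathbb{R}$-bilinear map $\widetilde{\nabla}$ on left-invariant vector fields by $\widetilde{\nabla}_X Y := \tfrac{1}{2}[X,Y]$, extend $C^\infty(G)$-linearly in the first slot and as a derivation in the second, and verify that $\widetilde{\nabla}$ is torsion-free and metric-compatible; uniqueness of the Levi-Civita connection will then conclude. Torsion-freeness is immediate: $\widetilde{\nabla}_X Y - \widetilde{\nabla}_Y X = \tfrac{1}{2}([X,Y] - [Y,X]) = [X,Y]$. For metric compatibility on left-invariant $X, Y, Z$, note that $-\mathcal{K}(Y, Z)$ is a constant function on $G$ (since $-\mathcal{K}$ is left-invariant by Lemma \ref{prime-isom} and $Y, Z$ are left-invariant), so $X\bigl(-\mathcal{K}(Y, Z)\bigr) = 0$. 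Compatibility then reduces to the ad-invariance identity $\mathcal{K}([X,Y], Z) + \mathcal{K}(Y, [X, Z]) = 0$, which follows at once from the formula $\mathcal{K}(U, V) = \operatorname{tr}(\operatorname{ad}_U \operatorname{ad}_V)$ and the Jacobi identity.

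\textbf{Part (a)} then follows routinely: for $v \in \mathfrak{g}$, let $X_v$ denote the left-invariant field extending $v$. A short computation with left translations shows that the curve $\gamma(t) := x\exp(tv)$ satisfies $\dot\gamma(t) = (X_v)_{\gamma(t)}$, so by (b)
\[
\nabla_{\dot\gamma}\dot\gamma \big|_{\gamma(t)} = \tfrac{1}{2}[X_v, X_v]_{\gamma(t)} = 0,
\]
and $\gamma$ is a geodesic. As $(x, v)$ ranges over all of $TG$ and geodesics are uniquely determined by their initial position and velocity, these are all the geodesics; since $\exp$ is defined on all of $\mathfrak{g}$, the manifold is geodesically complete.

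\textbf{Parts (c) and (d)} are direct algebraic computations from (b). Unpacking the author's curvature convention,
\[
R_{XY}Z = \nabla_{[X,Y]}Z - \nabla_X\nabla_Y Z + \nabla_Y\nabla_X Z = \tfrac{1}{2}[[X,Y], Z] - \tfrac{1}{4}\bigl([X,[Y,Z]] - [Y,[X,Z]]\bigr),
\]
and the Jacobi identity, rewritten as $[X,[Y,Z]] - [Y,[X,Z]] = [[X,Y], Z]$, gives $R_{XY}Z = \tfrac{1}{4}[[X,Y], Z]$, which is (c). For (d), substitute into $R_{XYZW} = -\mathcal{K}(R_{XY}Z, W)$ and apply the ad-invariance identity once more to pass from $\mathcal{K}([[X,Y], Z], W)$ to $\mathcal{K}([X,Y], [Z, W])$; bi-invariance of the resulting $(0,4)$-tensor on left-invariant fields is inherited from the bi-invariance of $\mathcal{K}$ and the naturality of the Lie bracket under left and right translations.

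\textbf{Main obstacle.} There is no deep obstacle here; the proof is a sequence of well-known manipulations on left-invariant fields, all ultimately reducible to the ad-invariance of the Killing form and the Jacobi identity. The only point that requires a bit of care is bookkeeping with sign conventions: the author's curvature convention $R_{XY}Z = \nabla_{[X,Y]}Z - [\nabla_X,\nabla_Y]Z$ is opposite to the commonest one, so one must verify that the correct coefficient $+\tfrac{1}{4}$ (rather than $-\tfrac{1}{4}$) emerges after the Jacobi cancellation.
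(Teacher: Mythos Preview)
Your proof is correct. The paper itself does not prove (a)--(c) but simply cites \cite[p.\,148 and pp.\,548--550]{Helg1978}; for (d) it does exactly what you do, namely invoke the skew-symmetry of $\operatorname{ad}_v$ with respect to the Killing form to rewrite $\mathcal{K}([[X,Y],Z],W)$ as $\mathcal{K}([X,Y],[Z,W])$. Your argument is therefore a self-contained unpacking of the cited material rather than a genuinely different route: verifying torsion-freeness and metric compatibility of $\widetilde\nabla$ on left-invariant fields, then reading off geodesics and curvature, is precisely the standard treatment of bi-invariant (semi-)Riemannian metrics that Helgason gives. The only minor remark is that your justification of bi-invariance of the $(0,4)$-tensor in (d) could be stated more directly: the curvature tensor of any (semi-)Riemannian manifold is automatically preserved by every isometry, and left and right translations are isometries of $-\mathcal{K}$ by Lemma~\ref{prime-isom}.
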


\begin{proof}
Part (a), part (b) and part (c) follow directly from the results contained in 

\cite[p.\,148 and p.\,548-550]{Helg1978} (our tensor $R$ is the opposite of the corresponding tensor of \cite{Helg1978}).

Part (c) implies that $R_{XYZW} = -\dfrac{1}{4} \mathcal{K}([[X,Y], Z], W)$. By the skew-symmetry, with respect to the Killing form, of every operator $ad_v: x \to [v,x]$ (see for instance \cite{Killing}), we have $\mathcal{K}([[X,Y], Z], W) = \mathcal{K}([X,Y], [Z, W])$ and this concludes (d).
\end{proof}

\begin{remdef}\label{def-compl}
We say that a real Lie group $G$ is a \emph{complex Lie group}, if it possesses a complex analytic structure, compatible with the real one, such that multiplication and inversion are holomorphic. It is known that a real Lie group $G$ with Lie algebra $\mathfrak{g}$ is complex if and only if there exists a complex Lie algebra $\mathfrak{h}$ such that $\mathfrak{h}^{\mathbb{R}} = \mathfrak{g}$ (see  \cite[Prop.\,1.110 p.\,95]{Knapp2002}).
\end{remdef}

\begin{lemma}\label{lemma-compl}
Let $G$ be a real Lie group and let $\mathfrak{g}$ be its Lie algebra with $\mathfrak{g}^{\mathbb{C}}$ as complexification.
Then 
the complex Lie algebra $\mathfrak{g}^{\mathbb{C}}$ is simple if and only if
$G$ is simple and not complex.
\end{lemma}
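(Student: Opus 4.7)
The plan is to prove both directions by tracking ideals under the interaction between $\mathfrak{g}$ and $\mathfrak{g}^{\mathbb{C}}$, with the main tool being complex conjugation $\sigma\colon \mathfrak{g}^{\mathbb{C}} \to \mathfrak{g}^{\mathbb{C}}$, $x+\mathbf{i}y \mapsto x-\mathbf{i}y$, an $\mathbb{R}$-linear anti-$\mathbb{C}$-linear Lie algebra automorphism whose fixed points form $\mathfrak{g}$.

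For the ``only if'' direction, assume $\mathfrak{g}^{\mathbb{C}}$ is simple. First I would show $\mathfrak{g}$ is simple: any ideal $I \subseteq \mathfrak{g}$ complexifies to an ideal $I \oplus \mathbf{i}I$ of $\mathfrak{g}^{\mathbb{C}}$, so $I = 0$ or $I = \mathfrak{g}$; non-abelianness of $\mathfrak{g}$ follows because $\mathfrak{g}^{\mathbb{C}}$ is non-abelian. Next I would argue by contrapositive that $G$ is not complex. If $G$ were complex, then (by Remark--Definition \ref{def-compl}) $\mathfrak{g} = \mathfrak{h}^{\mathbb{R}}$ for some complex Lie algebra $\mathfrak{h}$; let $J$ denote multiplication by $\mathbf{i}$ on $\mathfrak{h}$, viewed as an $\mathbb{R}$-linear endomorphism of $\mathfrak{g}$ satisfying $J^2=-\mathrm{id}$ and $[Jx,y]=J[x,y]=[x,Jy]$. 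Extending $J$ $\mathbb{C}$-linearly to $\mathfrak{g}^{\mathbb{C}}$, the $\pm\mathbf{i}$-eigenspaces $\mathfrak{h}^{\pm}$ are non-zero $\mathbb{C}$-ideals with $\mathfrak{g}^{\mathbb{C}} = \mathfrak{h}^{+} \oplus \mathfrak{h}^{-}$, contradicting simplicity.

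For the ``if'' direction, assume $G$ is simple and not complex. Then $\mathfrak{g}$ is simple, hence semi-simple, so (by Cartan's criterion and the standard decomposition) $\mathfrak{g}^{\mathbb{C}}$ is a semi-simple complex Lie algebra and splits as $\mathfrak{g}^{\mathbb{C}} = \mathfrak{s}_1 \oplus \cdots \oplus \mathfrak{s}_k$ into simple complex ideals. I need $k=1$. Since $\sigma$ is an $\mathbb{R}$-linear automorphism of $\mathfrak{g}^{\mathbb{C}}$ of order $2$, it permutes the $\mathfrak{s}_i$ (each $\sigma(\mathfrak{s}_i)$ is again a simple complex ideal because $\sigma$ is anti-$\mathbb{C}$-linear). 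I distinguish two cases. Case (A): some $\mathfrak{s}_i$ satisfies $\sigma(\mathfrak{s}_i)=\mathfrak{s}_i$; then $\mathfrak{s}_i = (\mathfrak{s}_i\cap\mathfrak{g}) \oplus \mathbf{i}(\mathfrak{s}_i\cap\mathfrak{g})$, so $\mathfrak{s}_i \cap \mathfrak{g}$ is a non-zero real ideal of $\mathfrak{g}$, whence $\mathfrak{s}_i \cap \mathfrak{g} = \mathfrak{g}$, i.e.\ $\mathfrak{s}_i = \mathfrak{g}^{\mathbb{C}}$ and $k=1$. Case (B): $\sigma$ has no fixed simple component, so it swaps some $\mathfrak{s}_1$ with $\mathfrak{s}_2$; then $\mathfrak{m} := \mathfrak{g} \cap (\mathfrak{s}_1 \oplus \mathfrak{s}_2) = \{x+\sigma(x) : x \in \mathfrak{s}_1\}$ is a non-zero ideal of $\mathfrak{g}$, so $\mathfrak{m} = \mathfrak{g}$ and in particular $\mathfrak{g}^{\mathbb{C}} = \mathfrak{s}_1 \oplus \mathfrak{s}_2$. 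Here I would check that the projection $\pi\colon \mathfrak{s}_1 \oplus \mathfrak{s}_2 \to \mathfrak{s}_1$ restricts to an $\mathbb{R}$-linear Lie algebra isomorphism $\mathfrak{g} \xrightarrow{\sim} \mathfrak{s}_1^{\mathbb{R}}$: injectivity uses that $y \in \mathfrak{g}$ satisfies $\sigma(y)=y$, and the real dimensions match ($\dim_{\mathbb{R}} \mathfrak{g} = \dim_{\mathbb{C}} \mathfrak{g}^{\mathbb{C}} = 2\dim_{\mathbb{C}} \mathfrak{s}_1 = \dim_{\mathbb{R}} \mathfrak{s}_1^{\mathbb{R}}$). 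Thus $\mathfrak{g} \cong \mathfrak{s}_1^{\mathbb{R}}$ with $\mathfrak{s}_1$ complex, making $G$ a complex Lie group, contrary to hypothesis. Hence Case (A) must hold and $\mathfrak{g}^{\mathbb{C}}$ is simple.

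The main obstacle is the ``if'' direction, and specifically the correct handling of Case (B): one must not overlook the possibility that $\sigma$ permutes simple components non-trivially, and must produce the explicit $\mathbb{R}$-linear isomorphism $\mathfrak{g} \cong \mathfrak{s}_1^{\mathbb{R}}$ to derive the contradiction with the non-complex hypothesis on $G$.
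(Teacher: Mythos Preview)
Your argument is correct. Both directions are handled cleanly: in the ``only if'' direction the $\pm\mathbf{i}$-eigenspace decomposition of the extended $J$ is the standard way to see that $(\mathfrak{h}^{\mathbb{R}})^{\mathbb{C}}$ splits, and in the ``if'' direction your case analysis on how conjugation permutes the simple factors of $\mathfrak{g}^{\mathbb{C}}$ is exactly what is needed, with Case~(B) correctly producing the isomorphism $\mathfrak{g}\cong\mathfrak{s}_1^{\mathbb{R}}$ via the first projection.

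The paper, however, does not argue any of this: its proof consists of a direct citation of \cite[Thm.\,6.94 p.\,407]{Knapp2002} (which, for a simple real Lie algebra $\mathfrak{g}$, gives the dichotomy between $\mathfrak{g}^{\mathbb{C}}$ being simple and $\mathfrak{g}$ being the realification of a complex simple algebra), together with the elementary remark that simplicity of $\mathfrak{g}^{\mathbb{C}}$ forces simplicity of $\mathfrak{g}$. So your route is genuinely different: you supply a self-contained proof of the dichotomy that Knapp's theorem encapsulates, whereas the paper outsources it. What you gain is independence from the reference and transparency about where each hypothesis is used (simplicity of $\mathfrak{g}$ in Cases~(A)/(B), non-complexity in ruling out Case~(B)); what the paper gains is brevity.
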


\begin{proof}
It follows from \cite[Thm.\,6.94 p.\,407]{Knapp2002}, remembering that if $\mathfrak{g}^{\mathbb{C}}$ is a simple complex Lie algebra, then $\mathfrak{g}$ is a simple real Lie algebra.
\end{proof}

\begin{defi}
We say that a real Lie group is \emph{absolutely simple} if it is simple and not complex or, equivalently by Lemma \ref{lemma-compl}, if the complexification of its Lie algebra is a simple, complex Lie algebra.
\end{defi}

A standard consequence of Schur's Lemma is the following

\begin{prop}\label{multKilling}
Let $G$ be a real Lie group and assume that $G$ is connected and absolutely simple. Then every bi-invariant real $(0,2)$-tensor on $G$ is a constant multiple of the Killing metric $- \mathcal{K}$ of $G$.
\end{prop}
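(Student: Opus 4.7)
The plan is to reduce the claim to an equivariance problem on the Lie algebra $\mathfrak{g}$, and then to apply Schur's lemma to the complexified adjoint representation. First, I would observe that a bi-invariant $(0,2)$-tensor $T$ on the connected group $G$ is determined by its value $T_e$ at the identity; bi-invariance translates into $\mathrm{Ad}(G)$-invariance of $T_e$, which, since $G$ is connected, is equivalent to $\mathrm{ad}(\mathfrak{g})$-invariance, i.e.\ $T_e([v,x],y)+T_e(x,[v,y])=0$ for every $v,x,y\in\mathfrak{g}$. So the proposition reduces to showing that every $\mathrm{ad}$-invariant real bilinear form $B$ on $\mathfrak{g}$ is a scalar multiple of the Killing form $\mathcal{K}_e$.

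Since $\mathfrak{g}$ is simple, hence semi-simple, Cartan's criterion gives that $\mathcal{K}_e$ is non-degenerate, so I can define a real endomorphism $\phi:\mathfrak{g}\to\mathfrak{g}$ by requiring $B(x,y)=\mathcal{K}_e(\phi(x),y)$. A short manipulation, combining the $\mathrm{ad}$-invariance of $B$ with that of $\mathcal{K}_e$ and the non-degeneracy of the latter, shows that $\phi$ commutes with $\mathrm{ad}_v$ for every $v\in\mathfrak{g}$. Extending $\phi$ by $\mathbb{C}$-linearity yields an endomorphism $\phi^{\mathbb{C}}$ of $\mathfrak{g}^{\mathbb{C}}$ that commutes with the adjoint action of $\mathfrak{g}^{\mathbb{C}}$. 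At this point the absolute simplicity hypothesis enters decisively via Lemma \ref{lemma-compl}: $\mathfrak{g}^{\mathbb{C}}$ is a simple complex Lie algebra, so its adjoint representation is irreducible over $\mathbb{C}$ (any invariant subspace is an ideal). Schur's lemma then forces $\phi^{\mathbb{C}}=\lambda\cdot\mathrm{Id}$ for some $\lambda\in\mathbb{C}$, and reality of $\phi$ yields $\lambda\in\mathbb{R}$; hence $B=\lambda\mathcal{K}_e$, and $T$ is a constant multiple of $\mathcal{K}$, equivalently of $-\mathcal{K}$.

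The main obstacle is not the algebraic manipulation but the need to pass to an algebraically closed field in order to apply Schur in its sharp scalar-operator form. If $G$ were only simple but admitted a compatible complex structure, then by Lemma \ref{lemma-compl} $\mathfrak{g}^{\mathbb{C}}$ would fail to be simple and the centralizer of $\mathrm{ad}(\mathfrak{g}^{\mathbb{C}})$ could be strictly larger than $\mathbb{C}\cdot\mathrm{Id}$, producing a larger family of bi-invariant tensors; the absolute simplicity assumption is precisely what rules this out.
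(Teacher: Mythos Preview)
Your argument is correct and is precisely the ``standard consequence of Schur's Lemma'' that the paper invokes without further detail: the paper gives no proof beyond that phrase, and your sketch fills it in exactly as intended, including the crucial passage to $\mathfrak{g}^{\mathbb{C}}$ to obtain an irreducible representation over an algebraically closed field.
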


\begin{lemma}\label{semisimple compact}
Let $G$ be a real Lie group and assume that $G$ is semi-simple and compact. Then the Killing tensor $\mathcal{K}$ of $G$ is negative definite at every point (i.e. the Killing metric $- \mathcal{K}$ is a Riemannian metric on $G$).
\end{lemma}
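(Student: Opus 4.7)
The plan is to reduce the statement, via the bi-invariance of $\mathcal{K}$, to checking that the Killing form of $\mathfrak{g}$ is negative definite, and then to exploit compactness by averaging to produce an $Ad$-invariant positive definite inner product on $\mathfrak{g}$.

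First I would apply Lemma \ref{prime-isom}: since $\mathcal{K}$ is bi-invariant, for every $a \in G$ the left translation $L_a$ is a diffeomorphism sending $e$ to $a$ and pulling $\mathcal{K}_a$ back to $\mathcal{K}_e$. Hence it suffices to prove that $\mathcal{K}_e$, i.e.\ the Killing form $B$ of $\mathfrak{g}$, is negative definite on $\mathfrak{g}$.

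Next I would exploit compactness. Starting from any positive definite inner product $\langle \cdot , \cdot \rangle_0$ on $\mathfrak{g}$, the averaged form
$$
\langle u , v \rangle \; := \; \int_G \langle Ad_g u, Ad_g v \rangle_0 \, d\mu(g),
$$
where $\mu$ is the normalized Haar measure of the compact group $G$, is positive definite and $Ad$-invariant. Differentiating the orthogonality relation $\langle Ad_{\exp(tX)}u, Ad_{\exp(tX)}v \rangle = \langle u, v \rangle$ at $t=0$ shows that every operator $ad_X: \mathfrak{g} \to \mathfrak{g}$ is skew-symmetric with respect to $\langle \cdot , \cdot \rangle$. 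A real skew-symmetric endomorphism of a Euclidean space diagonalizes over $\mathbb{C}$ with purely imaginary eigenvalues $i\lambda_1, \dots , i\lambda_n$, so $ad_X^2$ has non-positive real eigenvalues $-\lambda_k^2$, and therefore
$$
B(X,X) \; = \; \mathrm{tr}(ad_X \circ ad_X) \; = \; -\sum_{k=1}^n \lambda_k^2 \; \le \; 0,
$$
with equality if and only if all $\lambda_k$ vanish, i.e.\ $ad_X = 0$, i.e.\ $X$ lies in the center $\mathfrak{z}(\mathfrak{g})$.

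To upgrade this non-positivity to strict negativity, I would invoke semi-simplicity of $\mathfrak{g}$: any $X \in \mathfrak{z}(\mathfrak{g})$ satisfies $B(X,Y) = \mathrm{tr}(ad_X \circ ad_Y) = 0$ for every $Y$, so $\mathfrak{z}(\mathfrak{g})$ sits inside the radical of $B$, which is trivial by Cartan's criterion (recalled earlier in the excerpt). Hence $B(X,X) < 0$ for every nonzero $X$, and combined with the first step, $\mathcal{K}$ is negative definite at every point of $G$. The only non-formal input is the averaging construction, which encapsulates the sole use of the compactness hypothesis; every other step is a standard piece of linear algebra or of the theory of semi-simple Lie algebras, so I do not expect any serious obstacle beyond correctly assembling these ingredients.
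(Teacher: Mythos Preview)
Your argument is correct and complete. The paper's own proof is simply a citation to \cite[Prop.\,6.6(i) p.\,132, Cor.\,6.7 p.\,133]{Helg1978}, whereas you have unpacked essentially the classical argument that lies behind that reference: produce an $Ad$-invariant inner product by Haar averaging, deduce that each $ad_X$ is skew-symmetric so that $B(X,X)=\mathrm{tr}(ad_X^2)\le 0$, and use Cartan's criterion to rule out equality for $X\ne 0$. Thus the approaches are not genuinely different in substance; yours is simply self-contained rather than deferred to the literature, which has the advantage of making the two uses of the hypotheses (compactness for the averaging, semi-simplicity for the triviality of the center) fully explicit.
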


\begin{proof}
It follows from \cite[Prop.\,6.6(i) p.\,132, Cor.\,6.7 p.\,133]{Helg1978}.
\end{proof}

\begin{rem}\label{defU}
Let $G$ be a simple, compact, connected, real Lie group and let $\mathfrak{g}$ be its Lie algebra; denote by $\Delta$ the \emph{diagonal} of $G \times G$ and by $Z$ the \emph{center} of $G$.

$Z$ is a closed subgroup of $G$ and it is finite. Indeed the center of $\mathfrak{g}$ is zero (since $G$ is simple, see \cite[Cor.\,6.2 p.\,132]{Helg1978}). Since the Lie algebra of $Z$ agrees with the center of $\mathfrak{g}$, then $Z$ is a discrete subgroup of the compact group $G$ and therefore $Z$ is finite.

Now we denote by $\mathcal{U}$ the semisimple compact connected Lie group defined by
$\mathcal{U}:= \dfrac{G \times G}{(Z \times Z) \cap \Delta}$ 
and consider the map 
$$T: \mathcal{U} \times G \to G, \ \ \ T (\{(g, h)\}, x) =  gxh^{-1}$$ 
where $\{(g, h)\}$ is the class of $(g, h)$ in $\dfrac{G \times G}{(Z \times Z) \cap \Delta}$.

\smallskip

$T$ is an effective and transitive left action of $\mathcal{U}$ on $G$ and its isotropy subgroup at the identity is 
$\widehat{\Delta} :=\dfrac{\Delta}{(Z \times Z) \cap \Delta}$. 
Therefore $G$ is diffeomorphic to the homogeneous space $\dfrac{\mathcal{U}}{\widehat{\Delta}}$.
Moreover, for every $\{(g, h)\} \in \mathcal{U}$, the map $x \mapsto T (\{(g, h)\}, x)$ is an isometry with respect to $-\mathcal{K}$ (and to $\mathcal{K}$).
Finally the pair $(\mathcal{U}, \widehat{\Delta})$ is a \emph{Riemannian symmetric pair} (in the sense of \cite[p.\,209]{Helg1978}) with \emph{involutive automorphism} given by $\sigma(\{(g, h)\})=\{(h, g)\}$.
\end{rem}

\begin{prop}\label{G-glob-simm}
Let $G$ be a simple, compact, connected, real Lie group and let $-\mathcal{K}$ be its Killing metric.

Then $(G, - \mathcal{K})$ is a globally symmetric Riemannian manifold with non-negative sectional curvature; furthermore every connected component of the Lie group of its isometries is diffeomorphic to $\dfrac{G \times G}{(Z\times Z) \cap \Delta}$, where $Z$ is the center of $G$ and $\Delta$ is the diagonal of $G \times G$.

Moreover, if $G$ is absolutely simple too, then $(G, - \mathcal{K})$ is an Einstein manifold.
\end{prop}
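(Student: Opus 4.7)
The plan is to establish the three assertions in order, exploiting the symmetric space structure already laid out in Remark \ref{defU}.

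For the globally symmetric structure, I would invoke Remark \ref{defU} directly, which exhibits $(\mathcal{U},\widehat\Delta)$ as a Riemannian symmetric pair with involutive automorphism $\sigma$; equivalently, by Lemma \ref{prime-isom} the inversion $j$ is an isometry of $(G,-\mathcal{K})$ fixing $e$ with differential $-\mathrm{id}$, i.e.\ the geodesic symmetry at $e$, and transitivity of the $\mathcal{U}$-action propagates geodesic symmetries to every point. For non-negative sectional curvature I would combine Proposition \ref{propr-gen}(d) with the $\mathrm{ad}$-skew-symmetry of $\mathcal{K}$: for $v,w\in\mathfrak{g}$ orthonormal with respect to $-\mathcal{K}_e$ one obtains
\[
K(v,w) = -\tfrac{1}{4}\,\mathcal{K}([v,w],[v,w]) \geq 0,
\]
where the inequality is Lemma \ref{semisimple compact}; bi-invariance then propagates the bound to every point of $G$.

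For the isometry group claim, since any two connected components of a Lie group are diffeomorphic (via left translation), it suffices to identify $\mathrm{Isom}(G,-\mathcal{K})^0$ with $\mathcal{U}$. Remark \ref{defU} already furnishes an effective, transitive isometric action $T$ of $\mathcal{U}$, giving a closed embedding $\mathcal{U} \hookrightarrow \mathrm{Isom}(G,-\mathcal{K})^0$. For the reverse inclusion I would appeal to the standard theory of Riemannian symmetric spaces of compact type \cite[Ch.~IV]{Helg1978}: any isometry fixing $e$ differentiates to an orthogonal transformation of $(\mathfrak{g},-\mathcal{K}_e)$ preserving the curvature, hence preserving the induced Lie triple system, and the identity component of the group of such transformations is exactly $\mathrm{Ad}(G)=\mathrm{Inn}(\mathfrak{g})$, i.e.\ the image of $\widehat\Delta$; a dimension count then forces $\mathrm{Isom}(G,-\mathcal{K})^0 = \mathcal{U}$.

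For the Einstein property when $G$ is absolutely simple, the Ricci tensor of $-\mathcal{K}$ is a bi-invariant symmetric $(0,2)$-tensor on $G$ (being a trace of the bi-invariant curvature from Proposition \ref{propr-gen}), so Proposition \ref{multKilling} forces $\mathrm{Ric} = c\,(-\mathcal{K})$ for some constant $c$, which is exactly the Einstein condition. The main obstacle throughout is the reverse containment in the isometry-group claim: controlling isometries not obviously coming from $T$ requires invoking the symmetric-space machinery, and is the only step where I would lean on an external reference rather than a direct computation from the paper's own setup.
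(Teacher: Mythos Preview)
Your proposal is correct and matches the paper's approach almost exactly: the paper likewise invokes Remark \ref{defU} together with \cite[Prop.\,3.4, p.\,209]{Helg1978} for the globally symmetric structure, Proposition \ref{propr-gen}(d) (plus negative-definiteness of $\mathcal{K}$) for non-negative sectional curvature, \cite[Thm.\,4.1(i), p.\,243]{Helg1978} for identifying the identity component of the isometry group with $\mathcal{U}=\dfrac{G\times G}{(Z\times Z)\cap\Delta}$, and Proposition \ref{multKilling} applied to the bi-invariant Ricci tensor for the Einstein statement. The only difference is cosmetic: where you sketch the reverse containment $\mathrm{Isom}^0\subseteq\mathcal{U}$ by hand (curvature-preserving linear isotropy, $\mathrm{Ad}(G)=\mathrm{Inn}(\mathfrak{g})$, dimension count), the paper simply quotes Helgason's theorem outright.
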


\begin{proof}
By \cite[Prop.\,3.4 p.\,209]{Helg1978},\  $(G, - \mathcal{K})$ is a globally symmetric Riemannian manifold, via Remark \ref{defU}. By Proposition \ref{propr-gen} (d), the sectional curvature of the space generated by two left-invariant and $\mathbb{R}$-independent vector fields $X, Y$ of $G$, agrees with

$- \dfrac{1}{4} \mathcal{K}([X, Y],[X, Y])$, which is non-negative and equal to $0$ if and only if $[X, Y] =0$ . The assertion about the connected components of the Lie group of the isometries, follows from \cite[Thm.\,4.1 (i) p.\,243]{Helg1978} and from the fact that in a Lie group all connected components are diffeomorphic to the component containing the identity.

The last statement is a consequence of Proposition \ref{multKilling}, taking into account that the Ricci tensor of $(G, - \mathcal{K})$ is bi-invariant.
\end{proof}

\begin{rem}
For further details and information on Lie groups with bi-invariant metrics, we refer, for instance, to \cite[Ch.\,2]{AlBet2015}.
\end{rem}

\section{Isometries of a compact Lie group}\label{ris-generali}

\begin{lemma}\label{lemma-F} Let $\mathfrak{g}$ be a real Lie algebra, whose complexification  $\mathfrak{g}^{\mathbb{C}}$ is a simple, complex Lie algebra and let $L$ be an isometry with respect to the Killing form $\mathcal{B}$ of $\mathfrak{g}$, such that
$L([v, w]) = [v, L(w)]$ for every $v, w \in \mathfrak{g}$.

Then  $L = \pm Id_{\mathfrak{g}}$. 
\end{lemma}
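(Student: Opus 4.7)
The plan is to reduce the problem to a statement about a complex irreducible representation and apply Schur's lemma, in the same spirit as Proposition \ref{multKilling}.

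First I would rewrite the hypothesis $L([v,w]) = [v, L(w)]$ as the intertwining relation $L \circ ad_v = ad_v \circ L$ for every $v \in \mathfrak{g}$. Then I would extend $L$ by $\mathbb{C}$-linearity to an endomorphism $L^{\mathbb{C}}: \mathfrak{g}^{\mathbb{C}} \to \mathfrak{g}^{\mathbb{C}}$. Writing any element $z \in \mathfrak{g}^{\mathbb{C}}$ as $z = v_1 + \mathbf{i} v_2$ with $v_1, v_2 \in \mathfrak{g}$, and using $\mathbb{C}$-linearity of both $L^{\mathbb{C}}$ and $ad_z$, the intertwining relation immediately propagates: $L^{\mathbb{C}} \circ ad_z = ad_z \circ L^{\mathbb{C}}$ for every $z \in \mathfrak{g}^{\mathbb{C}}$. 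In other words, $L^{\mathbb{C}}$ is an endomorphism of $\mathfrak{g}^{\mathbb{C}}$-modules when $\mathfrak{g}^{\mathbb{C}}$ acts on itself by the adjoint representation.

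Next, since $\mathfrak{g}^{\mathbb{C}}$ is simple, its adjoint representation is irreducible (sub-modules are exactly ideals, and a simple Lie algebra has only $0$ and itself as ideals; note also that $\mathfrak{g}^{\mathbb{C}} \ne 0$). By Schur's lemma applied to a $\mathbb{C}$-linear endomorphism of a finite-dimensional irreducible complex representation, there exists $\lambda \in \mathbb{C}$ with $L^{\mathbb{C}} = \lambda \, Id_{\mathfrak{g}^{\mathbb{C}}}$. Restricting to $\mathfrak{g}$, one has $L(v) = \lambda v$ for every $v \in \mathfrak{g}$; since $L$ sends $\mathfrak{g}$ into $\mathfrak{g}$ and $\mathfrak{g} \ne 0$, the scalar $\lambda$ must actually lie in $\mathbb{R}$.

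Finally I would use the isometry condition to pin down $\lambda$. Simplicity of $\mathfrak{g}^{\mathbb{C}}$ implies semi-simplicity, so by Cartan's criterion the Killing form $\mathcal{B}^{\mathbb{C}}$ of $\mathfrak{g}^{\mathbb{C}}$ is non-degenerate, and hence its restriction $\mathcal{B}$ to $\mathfrak{g}$ is non-degenerate as well. From $\mathcal{B}(L v, L w) = \mathcal{B}(v, w)$ and $L = \lambda \, Id_{\mathfrak{g}}$ one gets $(\lambda^2 - 1)\mathcal{B}(v,w) = 0$ for all $v, w$, so $\lambda^2 = 1$ and $\lambda = \pm 1$, as required. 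The only subtle step is confirming that Schur's lemma can be invoked over $\mathbb{C}$; this is precisely why one complexifies first and uses the assumption that $\mathfrak{g}^{\mathbb{C}}$ (not merely $\mathfrak{g}$) is simple.
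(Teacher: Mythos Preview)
Your proof is correct and follows essentially the same route as the paper's own argument: complexify, observe that $L^{\mathbb{C}}$ commutes with the adjoint action, and use the simplicity of $\mathfrak{g}^{\mathbb{C}}$ to force $L^{\mathbb{C}} = \lambda\,Id$, then the isometry condition to get $\lambda^2 = 1$. The only cosmetic difference is that the paper unfolds Schur's lemma explicitly (showing that any eigenspace $V_\lambda$ of $L^{\mathbb{C}}$ is an ideal, hence all of $\mathfrak{g}^{\mathbb{C}}$), whereas you cite Schur's lemma by name; also, the paper concludes $\lambda = \pm 1$ directly from the fact that $L^{\mathbb{C}}$ is an isometry for the non-degenerate form $\mathcal{B}^{\mathbb{C}}$, while you first note $\lambda \in \mathbb{R}$ and then use the isometry condition on $\mathfrak{g}$.
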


\begin{proof}
 The killing form $\mathcal{B}^{\mathbb{C}}$ of $\mathfrak{g}^{\mathbb{C}}$ is the extension by $\mathbb{C}$-linearity of the Killing form $\mathcal{B}$ of $\mathfrak{g}$; by $\mathbb{C}$-linearity too, $L$ can be extended to a map $L^{\mathbb{C}}: \mathfrak{g}^{\mathbb{C}} \to \mathfrak{g}^{\mathbb{C}}$ which is an isometry, with respect to the Killing form $\mathcal{B}^{\mathbb{C}}$ of $\mathfrak{g}^{\mathbb{C}}$, satisfying again the analogous condition $L^{\mathbb{C}}([v, w]) = [v, L^{\mathbb{C}}(w)]$ for every $v, w \in \mathfrak{g}^{\mathbb{C}}$. Let $\lambda \in \mathbb{C}$ be an eigenvalue of $L^{\mathbb{C}}$ and let $V_{\lambda} \ne \{0\}$ be the corresponding eigenspace. If $v \in \mathfrak{g}^{\mathbb{C}}$ and $w \in V_{\lambda}$, then $L^{\mathbb{C}}([v, w]) =[v, \lambda w] = \lambda [v, w]$, so $[v, w] \in V_{\lambda}$, which turns out to be a non-zero ideal of $\mathfrak{g}^{\mathbb{C}}$ and therefore $V_{\lambda}= \mathfrak{g}^{\mathbb{C}}$, i. e. $L^{\mathbb{C}}= \lambda \, Id_{_{\mathfrak{g}^{\mathbb{C}}}}$. Since $L^{\mathbb{C}}$ is an isometry with respect to the Killing form $\mathcal{B}^{\mathbb{C}}$, which is non-degenerate by Cartan's criterion, the map $L^{\mathbb{C}}$ agrees with $\pm Id_{_{\mathfrak{g}^{\mathbb{C}}}}$, so that $L = \pm Id_{\mathfrak{g}}$.
\end{proof}

\begin{prop}\label{ident-fissa}
Let $G$ be an absolutely simple, compact, connected, real Lie group and 
Let $- \mathcal{K}$ be its Killing metric.

Then $F: (G, - \mathcal{K})  \to (G, - \mathcal{K})$ is an isometry, fixing the identity  $e \in G$, if and only if there exists an automorphism $\Phi$ of the Lie group $G$ such that
either $F = \Phi$ or $F = \Phi \circ j$, where $j$ is the inversion map.
\end{prop}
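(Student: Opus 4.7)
My plan is: the ``if'' direction is immediate from Lemma \ref{prime-isom}, since both $\Phi \in \mathrm{Aut}(G)$ and $\Phi \circ j$ fix $e$ and preserve $\mathcal{K}$. For the converse, let $F$ be an isometry with $F(e)=e$ and set $L := dF_e : \mathfrak{g} \to \mathfrak{g}$; I aim to reconstruct a group automorphism of $G$ out of $L$. As the differential of an isometry, $L$ is a linear isometry of the Killing form $\mathcal{B}=\mathcal{K}_e$. By Proposition \ref{propr-gen}(a) the geodesics through $e$ are the one-parameter subgroups $t \mapsto \exp(tv)$, so mapping geodesics to geodesics forces $F(\exp v) = \exp(L(v))$ for every $v \in \mathfrak{g}$; surjectivity of $\exp$ (since $G$ is compact and connected) then pins $F$ down entirely from $L$.

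Next, isometries preserve the curvature tensor, and Proposition \ref{propr-gen}(c) translates this into the cubic identity $L([[X,Y],Z]) = [[L(X),L(Y)],L(Z)]$ for all $X,Y,Z \in \mathfrak{g}$, equivalently $L \circ ad_{[X,Y]} \circ L^{-1} = ad_{[L(X),L(Y)]}$. I plan to exploit simplicity of $\mathfrak{g}$: $[\mathfrak{g},\mathfrak{g}]=\mathfrak{g}$ and the center of $\mathfrak{g}$ is trivial, so $ad : \mathfrak{g} \to ad(\mathfrak{g})$ is a linear isomorphism. Extending the previous identity by linearity defines a unique linear map $\tilde{\eta}: \mathfrak{g} \to \mathfrak{g}$ characterised by $L \circ ad_v \circ L^{-1} = ad_{\tilde{\eta}(v)}$ for every $v \in \mathfrak{g}$. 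Applying this to $[X,Y]$ and using $ad_{[X,Y]} = [ad_X, ad_Y]$ yields $\tilde{\eta}([X,Y]) = [\tilde{\eta}(X), \tilde{\eta}(Y)]$, and invertibility of $L$ together with simplicity of $\mathfrak{g}$ upgrades $\tilde{\eta}$ to an element of $\mathrm{Aut}(\mathfrak{g})$.

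The heart of the proof, which I expect to be the main obstacle, is the conclusion $L = \pm \tilde{\eta}$. For this I consider $M := \tilde{\eta}^{-1} \circ L$. It is a $\mathcal{B}$-isometry, since both $L$ and $\tilde{\eta}$ are (the latter because Lie algebra automorphisms preserve the Killing form). Moreover, because $\tilde{\eta}$ is an automorphism, $\tilde{\eta} \circ ad_v \circ \tilde{\eta}^{-1} = ad_{\tilde{\eta}(v)} = L \circ ad_v \circ L^{-1}$, which rearranges to say $M$ commutes with every $ad_v$, i.e. $M([v,w]) = [v, M(w)]$ for all $v,w \in \mathfrak{g}$. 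Lemma \ref{lemma-F} (applicable since $\mathfrak{g}^{\mathbb{C}}$ is simple by absolute simplicity of $G$) now forces $M = \pm \mathrm{Id}$, so $L = \pm \tilde{\eta}$.

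To conclude, I need to realise $\tilde{\eta}$ as the differential of a group automorphism of $G$. I would lift $F$ to an isometry $\tilde{F}$ of the universal cover $(\tilde{G}, -\tilde{\mathcal{K}})$ fixing the identity of the simply connected $\tilde{G}$, integrate $\tilde{\eta}$ to a Lie group automorphism $\tilde{\Phi}$ of $\tilde{G}$ (which by Lemma \ref{prime-isom} is an isometry with differential $\tilde{\eta}$), and use that isometries of a connected Riemannian manifold with equal $1$-jets at a single point coincide: $\tilde{F}$ must agree with $\tilde{\Phi}$ or with $\tilde{\Phi} \circ \tilde{j}$. Finally, since $\tilde{F}$ descends to the well-defined $F : G \to G$, so does $\tilde{\Phi}$; hence $\tilde{\Phi}$ preserves the central kernel of $\tilde{G} \to G$ and descends to an automorphism $\Phi \in \mathrm{Aut}(G)$, giving $F = \Phi$ or $F = \Phi \circ j$.
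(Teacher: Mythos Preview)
Your proof is correct and reaches the same conclusion, but the route to the key identity $L \circ ad_v \circ L^{-1} = ad_{\tilde{\eta}(v)}$ (with $\tilde{\eta}\in\mathrm{Aut}(\mathfrak{g})$) is genuinely different from the paper's. The paper invokes the Riemannian symmetric pair $(\mathcal{U},\widehat{\Delta})$ of Remark~\ref{defU} and Helgason's theorem to identify $\mathcal{J}^0$ with $\mathcal{U}$; a dimension count plus invariance of domain then yields $Ad(G)=(d(\mathcal{J}_e))^0$, so conjugation by $dF$ normalises $Ad(G)$ and produces an automorphism $\alpha$ of $Ad(G)$, whose derivative gives the desired $\overline{\alpha}\in\mathrm{Aut}(\mathfrak{g})$. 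You bypass all of this by using directly that an isometry preserves the curvature tensor $R_{XY}Z=\tfrac14[[X,Y],Z]$ of Proposition~\ref{propr-gen}(c), obtaining the cubic identity $L([[X,Y],Z])=[[LX,LY],LZ]$; since $[\mathfrak{g},\mathfrak{g}]=\mathfrak{g}$ and $ad$ is injective (both from simplicity), this yields a well-defined linear $\tilde{\eta}$ with $L\circ ad_v\circ L^{-1}=ad_{\tilde{\eta}(v)}$, and the Jacobi identity $ad_{[X,Y]}=[ad_X,ad_Y]$ forces $\tilde{\eta}$ to be a Lie-algebra automorphism. From that point on (application of Lemma~\ref{lemma-F} to $\tilde{\eta}^{-1}\circ L$, lift to $\widetilde{G}$, integration to $\Psi\in\mathrm{Aut}(\widetilde{G})$, and descent) the two arguments are essentially identical. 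Your approach is more elementary and self-contained, needing neither the symmetric-pair structure nor the identification of $\mathcal{J}^0$; the paper's approach, on the other hand, makes the geometry of the isotropy group explicit and yields as a by-product the description of the identity component of the isometry group used in Proposition~\ref{G-glob-simm}. One small remark: the sentence about $F(\exp v)=\exp(Lv)$ and surjectivity of $\exp$ ``pinning $F$ down'' is not actually used later (you rely instead on the standard fact that isometries with equal $1$-jets coincide), so you might drop it to streamline the exposition.
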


\begin{proof}

Lemma \ref{prime-isom} implies that the automorphisms and the inversion map of the Lie group $G$ are isometries with respect to $- \mathcal{K}$, fixing $e$.

For the converse, let $\mathcal{J}$ be the group of isometries of $(G, - \mathcal{K})$, let $\mathcal{J}_e$ be the corresponding  subgroup of isotropy at $e$ and let $\mathcal{J}^0$, $\mathcal{J}_e^0$ be their connected components containing the identity.
In Remark \ref{defU}, we observed that $(\mathcal{U}, \widehat{\Delta})$ is a Riemannian symmetric pair and so, by \cite[Thm.\,4.1(i) p.\,243]{Helg1978}, we have $\mathcal{J}^0 \simeq \mathcal{U}$ (as Lie groups). 
From this we get that $dim(\mathcal{J})=dim(\mathcal{J}^0) = dim(\mathcal{U}) = 2 \, dim(G)$ and therefore 

$dim(\mathcal{J}_{e}^0) =  dim(\mathcal{J}_{e}) =dim(\mathcal{J}) - dim(G) = dim(G)$.

Let us consider the adjoint representations of $G$ and of its Lie algebra $\mathfrak{g}$, denoted by $Ad: G \to GL(\mathfrak{g})$ and by $ad: \mathfrak{g} \to \mathfrak{gl}(\mathfrak{g})$, respectively; we indicate with $Ad(G)$ and with $ad(\mathfrak{g})$ their images. 
Note that $Ad(G)$ is a closed Lie subgroup of $GL(\mathfrak{g})$ and $ad(\mathfrak{g})$ is its Lie algebra; moreover, since the kernel of the map $ad$ agrees with the center of $\mathfrak{g}$ and this last is zero, we get that $ad: \mathfrak{g} \to ad(\mathfrak{g})$ is an isomorphism of Lie algebras; this implies that $Ad(G)$ and $G$ have the same dimension.

Let us consider also the representation
$d: \mathcal{J}_{e} \to GL(\mathfrak{g})$ defined as the differential at $e$ of every element of $\mathcal{J}_{e}$. 
By \cite[Prop.\,62 p.\,91]{O'N1983}, $d$ is a faithful representation and so: $d(\mathcal{J}_e^0) = (d(\mathcal{J}_e))^0$ (the component of the image $d(\mathcal{J}_e)$ containing the identity). Hence $dim(d(\mathcal{J}_e))^0 = dim(\mathcal{J}_{e}^0) = dim (G)$.
Since $G$ is connected, we have the inclusion $Ad(G)  \subseteq (d(\mathcal{J}_e))^0$. Now these manifolds have the same dimension, hence, by the \emph{Theorem of invariance of domain}, $Ad(G)$ is open in $(d(\mathcal{J}_e))^0$; moreover, $Ad(G)$ is compact and $(d(\mathcal{J}_e))^0$ is connected and this allows to get that $Ad(G) = (d(\mathcal{J}_e))^0$.

For any fixed $F \in \mathcal{J}_e$, the previous equality gives: $d F \, Ad(G) \, d F^{-1} = Ad(G)$. Hence there exists a unique automorphism $\alpha$ of $Ad(G)$ such that: 

(*) \ \ \ \ \ \ \ \ \ \ \ \ \ \ \ \ \ \ \ $d F \circ Ad_X \circ d F^{-1} = \alpha(Ad_X)$, \  for every $X \in G$.

\smallskip

We denote by $\exp: \mathfrak{g} \to G$ and by $\widehat{\exp}: ad(\mathfrak{g}) \to Ad(G)$ the two usual exponential maps. It is well-known that $Ad \circ \exp = \widehat{\exp} \circ ad$ (see for instance \cite[Thm.\,3.28 p.\,60]{Ha2015}).

For every $t \in\mathbb{R}$ and every $v \in \mathfrak{g}$, the (*) above becomes:

(**) \ \ \ \ \ \ \ \ \ \ \ \ \ \ \ \ \ \ \ \ \ $d F \circ Ad_{\exp(tv)} \circ d F^{-1} = \alpha(Ad_{\exp(tv)})$.

\smallskip

Now let $\widetilde{\alpha}$ be the unique automorphism of $ad(\mathfrak{g})$ such that $\alpha \circ \widehat{\exp} = \widehat{\exp} \circ \widetilde{\alpha}$.
The map $\overline{\alpha} := ad^{-1} \circ \widetilde{\alpha} \circ ad$ is an automorphism of the Lie algebra $\mathfrak{g}$, satisfying $Ad  \circ \exp \circ  \, \overline{\alpha} = \alpha  \circ  Ad \circ \exp$. 
Hence, for every $t \in\mathbb{R}$ and every $v \in \mathfrak{g}$, the (**) becomes:

(***)  \ \ \ \ \ \ \ \ \ \ \ \ \ \ \ \ \ \ \ \ $d F \circ Ad_{\exp(tv)} \circ d F^{-1} = Ad_{\exp(t \overline{\alpha}(v) )}$.

\smallskip

Now, if we derive the identity (***), with respect to $t$, for $t=0$, we get:
$$
d F \circ ad_v \circ d F^{-1} = ad_{\overline{\alpha}(v)}
.$$
Since $ad_v(w)= [v, w]$ for every $v, w \in \mathfrak{g}$ and remembering that $\overline{\alpha}$ is an automorphism of the Lie algebra $\mathfrak{g}$, we get 
$
dF([v, w]) = [\overline{\alpha}(v), dF(w)]= \overline{\alpha}([v, \overline{\alpha}^{-1}(dF(w))])
$
and so
$
(\overline{\alpha}^{-1} \circ dF)([v, w]) = [v, (\overline{\alpha}^{-1} \circ dF)(w))],$ for every $v, w \in \mathfrak{g}$.
Note that $dF$ and $\overline{\alpha}$ are both isometries of $\mathfrak{g}$, with respect to its Killing form; moreover, since $G$ is absolutely simple, its Lie algebra $\mathfrak{g}$ satisfies the hypotheses of Lemma \ref{lemma-F}, thus we obtain $dF = \pm \overline{\alpha}$.

Let $\pi: \widetilde{G} \to G$ be the universal covering group of $G$ and let $\widetilde{F}: \widetilde{G} \to \widetilde{G}$ be such that $F \circ \pi = \pi \circ \widetilde{F}$, with $\widetilde{F}(\widetilde{e}) = \widetilde{e}$, where $\widetilde{e}$ is the identity of $\widetilde{G}$; from this we get $ \widetilde{F}_* =  \pi_*^{-1} \circ dF \circ  \pi_* =  \pi_*^{-1} \circ (\pm \overline{\alpha}) \circ  \pi_*$ , where $ \widetilde{F}_*$ ,\  $\pi_*$ denote the differentials at the identity $\widetilde{e}$ of $\widetilde{F}$ and $\pi$, respectively. 
If we denote by $\beta$ the automorphism of the Lie algebra $\widetilde{\mathfrak{g}}$ of $\widetilde{G}$, given by $\beta=  \pi_*^{-1} \circ \overline{\alpha} \circ \pi_*$, we can write $ \widetilde{F}_* = \pm \beta$.

By \cite[Thm.\,3.27 p.\,101]{Warner1983}, there exists a unique automorphism $\Psi$ of the simply connected Lie group $\widetilde{G}$, whose
differential at the identity $\widetilde{e}$, $\Psi_*$, agrees with $\beta$. Hence $\widetilde{F}_* = \pm \Psi_*$.

Since $\Psi$ is an automorphism of $\widetilde{G}$, it is an isometry of $(\widetilde{G}, - \widetilde{\mathcal{K}})$, where $-\widetilde{\mathcal{K}}$ is the Killing metric of $\widetilde{G}$ (remember Lemma \ref{prime-isom}).
It is easy to check that $\pi: (\widetilde{G}, - \widetilde{\mathcal{K}}) \to (G, - {\mathcal{K}})$ is a local isometry and this implies that $\widetilde{F}: (\widetilde{G}, - \widetilde{\mathcal{K}}) \to (\widetilde{G}, - \widetilde{\mathcal{K}})$ is an isometry too.

If $\widetilde{F}_* = \Psi_*$, then $\widetilde{F} =\Psi$ (see, for instance, \cite[Prop.\,62 p.\,91]{O'N1983}) and hence $F \circ \pi = \pi \circ \Psi$. The surjectivity of $\pi$, together with the fact that $\pi$ and $\Psi$ are Lie group homomorphisms, implies that $F$ is a (bijective) endomorphism of $G$. This allows to conclude that $F \in Aut(G)$.

Suppose now that $\widetilde{F}_* = - \Psi_*$. We denote by $\widetilde{j}$ the inversion map of $\widetilde{G}$ and by $ \widetilde{j}_*$ its differential at the identity $\widetilde{e}$ . By Lemma \ref{prime-isom}, \  $\widetilde{j}$ is an isometry of $(\widetilde{G}, -\widetilde{\mathcal{K}})$, furthermore $\widetilde{j}_*$ agrees with the opposite of the identity map (see, for instance,  \cite[p.\,147]{Helg1978}).

Now $\widetilde{F}_* =  \widetilde{j}_* \circ \Psi_* = (\widetilde{j} \circ \Psi)_*$ and, arguing as in the previous case, we get that $\widetilde{F} = \widetilde{j} \circ  \Psi$ and so,  $F \circ \pi = \pi \circ \widetilde{j} \circ  \Psi = j \circ  \pi  \circ \Psi$. Hence $j \circ F \circ \pi = \pi  \circ \Psi$ and, as above, we obtain that $\Phi:= j \circ F \in Aut(G)$; therefore we conclude that $F = j \circ \Phi = \Phi \circ j$, with $\Phi \in Aut(G)$.
\end{proof}

\begin{thm}\label{teor-gen}
Let $G$ be an absolutely simple, compact, connected real Lie group and
let $- \mathcal{K}$ be its Killing metric.
Then $F: (G, -\mathcal{K}) \to (G, -\mathcal{K})$ is an isometry if and only if 
there exist an element $a \in G$ and an automorphism $\Phi$ of the Lie group $G$ such that
either $F = L_a \circ \Phi$ or $F = L_a \circ \Phi \circ j$, where $L_a$ is the left translation associated to $a$ and $j$ is the inversion map.
\end{thm}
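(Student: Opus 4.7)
The plan is to reduce the general statement to the identity-fixing case already established in Proposition \ref{ident-fissa}. Since every left translation is an isometry of $(G,-\mathcal{K})$ by bi-invariance, the group of isometries fixing the identity already captures all of the ``twisting'' an isometry can do, and an arbitrary isometry differs from such a one only by a left translation.

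For the \emph{if} direction, I would invoke Lemma \ref{prime-isom}: any Lie group automorphism $\Phi$ of $G$ and the inversion map $j$ both preserve $\mathcal{K}$ and hence are isometries of $(G,-\mathcal{K})$, while bi-invariance (again Lemma \ref{prime-isom}) says the left translation $L_a$ is an isometry too. Since the composition of isometries is an isometry, the maps $L_a\circ\Phi$ and $L_a\circ\Phi\circ j$ are isometries of $(G,-\mathcal{K})$.

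For the \emph{only if} direction, I would take an arbitrary isometry $F$ of $(G,-\mathcal{K})$ and set $a:=F(e)\in G$. Then $L_{a^{-1}}$ is an isometry of $(G,-\mathcal{K})$, so $\widehat{F}:=L_{a^{-1}}\circ F$ is an isometry fixing $e$. Applying Proposition \ref{ident-fissa} to $\widehat{F}$, there is an automorphism $\Phi\in\mathrm{Aut}(G)$ with either $\widehat{F}=\Phi$ or $\widehat{F}=\Phi\circ j$. Composing on the left with $L_a$ yields $F=L_a\circ\Phi$ or $F=L_a\circ\Phi\circ j$, as required.

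I do not expect any real obstacle: Proposition \ref{ident-fissa} carries all the genuine content (the adjoint-representation argument, the use of Lemma \ref{lemma-F}, and the lifting to the universal cover), while the passage from ``isometries fixing $e$'' to ``all isometries'' via left translations is the standard homogeneity trick and poses no difficulty.
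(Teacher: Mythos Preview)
Your proposal is correct and follows exactly the same approach as the paper: invoke Lemma \ref{prime-isom} for the ``if'' direction, and for the ``only if'' direction set $a=F(e)$, observe that $L_{a^{-1}}\circ F$ is an isometry fixing $e$, and apply Proposition \ref{ident-fissa}. There is no substantive difference between your argument and the paper's.
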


\begin{proof}
Note that $L_a \circ \Phi$ and $L_a \circ \Phi \circ j$ are both isometries, because they are compositions of isometries (remember again Lemma \ref{prime-isom}).

The converse follows from Proposition \ref{ident-fissa}, because, for $a= F(e)$, \ $L_{a^{-1}} \circ F$ is an isometry fixing the identity $e \in G$.
\end{proof}

\begin{rem}
As known, relevant examples of absolutely simple, compact, connected, real Lie groups are:

\begin{itemize}
\item \emph{the special orthogonal group} $S \mathcal{O}(n)$, $n \ge 3$, $n \ne 4$;

\item \emph{the special unitary group} $SU(n)$, $n \ge 2$;

\item \emph{the compact symplectic group} $Sp(n)$, $n \ge 1$.
\end{itemize}

The automorphisms of $S \mathcal{O}(n)$, with $n \ge 3$ odd, of $SU(2)$ and of $Sp(n)$, with $n \ge 1$, are precisely the \emph{inner automorphisms} of the corresponding group.

Furthermore the automorphisms of $S \mathcal{O}(n)$, with $n \ge 6$ even, are precisely the maps $X \mapsto AXA^T$, with $A \in \mathcal{O}(n)$.

Finally the automorphisms of $SU(n)$, with $n \ge 3$, are the inner automorphisms and all the maps $X \mapsto C \overline{X} C^*$, where $C \in SU(n)$.

From these facts and  from Theorem \ref{teor-gen} we can easily get the following

\end{rem}

\begin{thm}\label{teor-gen-n}\ \\
a) The isometries of $(S \mathcal{O}(n), - \mathcal{K})$, with $n \ge 3$ odd, are precisely the following maps:

$X \to AXB$ and  $X \to AX^{T}B$, with $A, B \in S \mathcal{O}(n)$.

\smallskip

b) The isometries of $(S \mathcal{O}(n), - \mathcal{K})$, with $n \ge 6$ even, are precisely the following maps:

$X \to AXB$ and $X \to AX^{T}B$, with $A,B$ both in $S\mathcal{O}(n)$ or both in $\mathcal{O}(n) \setminus S\mathcal{O}(n)$.

\smallskip

c) The isometries of $(S U (2), - \mathcal{K})$  are precisely the following maps:

$X \to AXB$ and $X \to AX^{*}B$, with $A, B \in SU(2)$.

\smallskip

d) The isometries of $(S U (n), - \mathcal{K})$, with $n \ge 3$,  are precisely the following maps:

 $X \to AXB$, \  $X \to AX^{*}B$, \  $X \to A\overline{X}B$ and  $X \to AX^{T}B$, with $A, B \in SU(n)$.

\smallskip

e) The isometries of $(Sp(n), - \mathcal{K})$,  with $n \ge 1$, are precisely the following maps:

$X \to AXB$ and  $X \to AX^{-1}B$, with $A, B \in Sp(n)$.
\end{thm}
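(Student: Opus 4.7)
The plan is to apply Theorem \ref{teor-gen} mechanically, inserting the explicit description of the automorphism group $Aut(G)$ recalled just before the statement, and then to simplify the two resulting families $L_a\circ \Phi$ and $L_a\circ\Phi\circ j$ by absorbing the constant factor $a$ into the parameters that describe $\Phi$. For each of the five groups I would first use that the inversion map $j$ has an elementary matrix form on $G$: for $X\in S\mathcal{O}(n)$ or $X\in U(n)$, $j(X)=X^T$ or $X^*$ respectively; for $X\in Sp(n)$, $j(X)=X^{-1}$ (there is no simpler expression). Then the cases split cleanly.

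For part (a), every $\Phi\in Aut(S\mathcal{O}(n))$ with $n$ odd is inner, so $\Phi(X)=CXC^T$ with $C\in S\mathcal{O}(n)$; setting $A:=aC$ and $B:=C^T$ gives $L_a\circ\Phi(X)=AXB$, while $L_a\circ\Phi\circ j(X)=aCX^TC^T=AX^TB$, with $A,B\in S\mathcal{O}(n)$. Part (b) is identical in form but $C$ may now lie in $\mathcal{O}(n)\setminus S\mathcal{O}(n)$; since $a\in S\mathcal{O}(n)$ and $B=C^T$, the determinants of $A=aC$ and of $B$ are equal, which is exactly the statement that $A,B$ lie both in $S\mathcal{O}(n)$ or both in $\mathcal{O}(n)\setminus S\mathcal{O}(n)$. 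Parts (c) and (e) are again purely inner: for $SU(2)$ one writes $\Phi(X)=CXC^*$ and uses $j(X)=X^*$ to obtain $X\mapsto AXB$ and $X\mapsto AX^*B$ with $A,B\in SU(2)$; for $Sp(n)$ one writes $\Phi(X)=CXC^{-1}$ and leaves $j(X)=X^{-1}$, obtaining $X\mapsto AXB$ and $X\mapsto AX^{-1}B$.

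Part (d) is the only case with a real subtlety: besides the inner automorphisms $X\mapsto CXC^*$ one has the family $X\mapsto C\overline{X}C^*$ with $C\in SU(n)$, $n\ge 3$. Combining these with $L_a$ gives immediately $X\mapsto AXB$ and $X\mapsto A\overline{X}B$. Composing further with $j(X)=X^*$ gives $X\mapsto AX^*B$ in the inner case, and $X\mapsto A\,\overline{X^{*}}\,B$ in the conjugate case; the identity $\overline{X^{*}}=X^T$ then produces the fourth family $X\mapsto AX^TB$. In each part, the converse direction (that every listed map is an isometry) is automatic from Lemma \ref{prime-isom}, since the listed maps are compositions of left translations, automorphisms of $G$, and the inversion map.

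The only step requiring care is (b), where one must track determinants to obtain the ``$A,B$ both in $S\mathcal{O}(n)$ or both in $\mathcal{O}(n)\setminus S\mathcal{O}(n)$'' condition and verify that the map $(a,C)\mapsto(aC,C^T)$ from $S\mathcal{O}(n)\times\mathcal{O}(n)$ onto the allowed pairs $(A,B)$ is surjective; I expect this to be the main bookkeeping point, but no conceptual obstacle arises.
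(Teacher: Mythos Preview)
Your proposal is correct and follows essentially the same approach as the paper: the paper's own argument is the single sentence ``From these facts and from Theorem \ref{teor-gen} we can easily get the following'', where ``these facts'' are exactly the descriptions of $Aut(G)$ you use. Your write-up simply fills in the routine bookkeeping (absorbing $a$ into $A$, identifying $j$ with transposition/conjugate-transposition, and in part (b) checking the determinant constraint and the surjectivity of $(a,C)\mapsto(aC,C^{T})$), none of which the paper spells out.
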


\begin{rem}
The Lie groups of isometries of $(S \mathcal{O}(n), - \mathcal{K})$, with $n \ge 3$ odd, of isometries of $(S U (2), - \mathcal{K})$, and of isometries of $(Sp(n), - \mathcal{K})$, with $n \ge 1$, have two connected components, while the Lie groups of isometries of $(S \mathcal{O}(n), - \mathcal{K})$, with $n \ge 6$ even, and of isometries of $(SU(n), - \mathcal{K})$, with $n \ge 3$, have four connected components.
\end{rem}

\begin{rem}\label{anche-Frob}
If $G$ is one of the groups  $S \mathcal{O}(n)$, $n \ge 3$ and $n \ne 4$, $SU(n)$, $n \ge 2$, or $Sp(n)$, $n \ge 1$, then 
$\mathcal{K}_A(X, Y) = c \cdot tr(A^{-1}XA^{-1}Y)$, for some strictly positive constant $c$, for every $A \in G$ and for every  $X, Y \in T_A(G)$ (as we can deduce, for instance, from \cite[Ex.\,6.19 p.\,129]{Sepa2007}). 

We denote by ${\phi}$ the (flat) \emph{Frobenius hermitian metric} of $M_m(\mathbb{C})$ ($m \ge 2$), defined by ${\phi}(A, B) = Re(tr(A B ^*))$, for every $A, B \in M_m(\mathbb{C})$. To simplify the notations, we denote also by ${\phi}$ its restriction to each submanifold $N$ of $M_m(\mathbb{C})$ and we call it \emph{Frobenius metric} of $N$. It is just a computation that, if $A \in U(m)$, then the maps $L_A$ and $R_A$, are isometries of $(M_m(\mathbb{C}), \phi)$ and, therefore, the Frobenius metric of $U(m)$ is bi-invariant.
Moreover,  arguing as in \cite[Recall\,4.1]{DoPe2018}, it is simple to verify that the expression of the Frobenius metric $\phi $ of $U(m)$ is the following: $\phi_A(X, Y) = -tr(A^* X A^* Y)$, for every $A \in U(m)$ and for every $X, Y \in T_A(U(m))$.

In each of the above cases, $G$ is a (closed) Lie subgroup of $U(n)$ or of $U(2n)$, i.e. $G$ is a submanifold of some $U(m)$ ($m \ge 2$); hence, on $G$, the metric $\phi$ is bi-invariant and $\phi = - \dfrac{1}{c} \mathcal{K}$ (with $c >0$).
Therefore, if $G$ is one of the above groups, then Proposition \ref{propr-gen}, Proposition \ref{G-glob-simm} and Theorem \ref{teor-gen-n} hold also with $\phi$ instead of $- \mathcal{K}$. 
\end{rem}

\begin{rem}
Parts (a) and (b) of Theorem \ref{teor-gen-n} \ can be compared with an analogous result, obtained in \cite[Thm.\,1]{AAH2013}.
\end{rem}

\section{Isometries of $S\mathcal{O}(4)$}\label{caso-SO4}

\begin{rem}
By Lemma \ref{semisimple compact}, the Killing metric of the semi-simple compact Lie group $S\mathcal{O}(4)$ is a Riemannian metric on $S\mathcal{O}(4)$. It is easy to check that the Killing form of the \emph{special orthogonal Lie algebra} $\mathfrak{so}(4)$, evaluated at $U, V$, agrees with $2 \, tr( U , V )$ (this extend to the case $n=2$ the formula ($3$) of \cite[Ex.\,6.19  p.\,129]{Sepa2007}). Hence the  Killing metric $-\mathcal{K}$ of $S\mathcal{O}(4)$ agrees with the double of the Frobenius metric $\phi$ of $S\mathcal{O}(4)$. 
Therefore, for the Lie group $S\mathcal{O}(4)$, Proposition \ref{propr-gen} holds for $\phi$ as well as for $-\mathcal{K}$. 
However, in \cite[Prop.\,4.3]{DoPe2018}, we have already proved that $(S\mathcal{O}(4), \phi)$ (and so, also $(S\mathcal{O}(4),- \mathcal{K})$) is an Einstein globally symmetric Riemannian manifold with non-negative sectional curvature.
\end{rem}

\begin{remsdefs}\label{Cayley} \ \\
a) The map
$\rho: \mathbb{C} \to M_2(\mathbb{R})$, given by $\rho(z) :=  
\begin{pmatrix}
Re(z) & -Im(z) \\ 
Im(z) & Re(z)
\end{pmatrix}
$
is a monomorphism of $\mathbb{R}$-algebras between $\mathbb{C}$ and $M_2(\mathbb{R})$. 

More generally, for any $h \ge 1$, we still denote  by $\rho$ the monomorphism of $\mathbb{R}$-algebras: $M_h(\mathbb{C}) \to M_{2h}(\mathbb{R})$, which maps the $h \times h$ complex matrix $Z=(z_{ij})$ to the $(2h) \times (2h)$ block real matrix  $(\rho(z_{ij}))$, having $h^2$ blocks of order $2 \times 2$. We refer to $\rho$ as the \emph{decomplexification map} of $M_h(\mathbb{C})$ into $M_{2h}(\mathbb{R})$.

It is known that, for every $Z \in M_h(\mathbb{C})$, the map $\rho$ satisfies:

$tr(\rho(Z)) = 2 Re(tr(Z)))$, $det(\rho(Z)) = |det(Z)|^2$ and $\rho(Z^*) = \rho(Z)^T$.

For simplicity, we still denote by $\rho$ all its restrictions to any subset of $M_h(\mathbb{C})$. Hence, for instance, $\rho(U(h)) = \rho(M_h(\mathbb{C})) \cap S\mathcal{O}({2h})$ is a Lie subgroup of $S\mathcal{O}({2h})$ (isomorphic to $U(h)$) and, in particular, $\rho(SU(2))$ is a Lie subgroup of $S\mathcal{O}(4)$, isomorphic to $SU(2)$.

\smallskip

b) We consider the matrix $J= J^T=J^{-1} \in \mathcal{O}(4)$, defined by
$J:=
\begin{pmatrix}
-1 & 0 & 0 & 0 \\ 
0 & 1 & 0 & 0 \\
0 & 0 & 1 & 0 \\
0 & 0 & 0 & 1
\end{pmatrix}
$ and the Lie subgroup of $S\mathcal{O}(4)$, conjugate to $\rho(SU(2))$ in $\mathcal{O}(4)$, defined by $J \rho(SU(2)) J$. It is easy to check that $\rho(SU(2)) \cap \big(J \rho(SU(2)) J\big) = \{ \pm I_4\}$ and that $X$ commutes with $JYJ$, for every $X, Y \in \rho(SU(2))$. Moreover, it is known that every matrix of $S\mathcal{O}(4)$ has a \emph{Cayley's factorization} as commutative product of a matrix of $\rho(SU(2))$ and of a matrix of $J \rho(SU(2)) J$ and that such factorization is unique up to the sign of both matrices (see, for instance, \cite[Thm.\,3.2]{BMCC2016} and also \cite{Mebius2005}, \cite{Thomas2014}, \cite{PerThom2017})

.
\smallskip

c) Let us consider $X=\rho(X_1) \, [J\rho(X_2)J]$ (with $X_1, X_2 \in SU(2)$) a matrix in $S\mathcal{O}(4)$, together with its Cayley's factorization. 
The map 
$\tau: S\mathcal{O}(4) \to S\mathcal{O}(4)$, given by

$X=\rho(X_1) \, [J\rho(X_2)J] \mapsto \tau(X):=\rho(X_1) \, [J \rho(X_2) J]^T = \rho(X_1) \, [J \rho(X_2^*) J]$, 

is well-defined and bijective; moreover $\tau^2 = Id$ \ and \ $\tau \circ j = j \circ \tau$ (where $j$ is the inversion map (i.e. the transposition map) of $S\mathcal{O}(4)$).

\smallskip

d) The map $\widehat{\chi}: SU(2) \times SU(2) \to S\mathcal{O}(4)$, defined by $\widehat{\chi}(X, Y)= \rho(X) J \rho(Y)J$, is an epimorphism of Lie groups, whose kernel is $\lbrace\pm (I_2, I_2)\rbrace$. Then the map $\widehat{\chi}$ induces a Lie group isomorphism $\chi : \dfrac{SU(2) \times SU(2)}{\{ \pm (I_2, I_2)\}} \to S\mathcal{O}(4)$.

Then $(S\mathcal{O}(4), - \mathcal{K})$ is a Riemannian manifold isometric to $(\dfrac{SU(2) \times SU(2)}{\{ \pm (I_2, I_2)\}}, - \mathcal{K}')$, where $-\mathcal{K}'$ is the Killing metric of the Lie group $\dfrac{SU(2) \times SU(2)}{\{ \pm (I_2, I_2)\}}$.

\smallskip 

e) The Killing tensor of $SU(2) \times SU(2)$\  is \ $\mathcal{K}_2 \times \mathcal{K}_2$, where $\mathcal{K}_2$ denotes the Killing tensor of $SU(2)$. We denote by 
$\sigma: SU(2) \times SU(2) \to SU(2) \times SU(2)$ the map which interchanges the two factors of $SU(2) \times SU(2)$. By a classical result due to de Rham (see \cite[Thm.\,III p.\,341]{deRham1952}), the isometries of $(SU(2) \times SU(2), - (\mathcal{K}_2 \times \mathcal{K}_2))$ are precisely the maps $ \psi_1 \times \psi_2 :  (X, Y) \to (\psi_1(X), \psi_2(Y))$ and the maps 

$ (\psi_1 \times \psi_2) \circ \sigma : (X, Y) \to (\psi_1(Y), \psi_2(X))$, where $\psi_1, \psi_2$ are isometries of $(SU(2), - \mathcal{K}_2)$. 
In particular, the map $\sigma$ in an isometry of $(SU(2) \times SU(2), - (\mathcal{K}_2 \times \mathcal{K}_2))$.

\smallskip

From these facts and from Theorem 2.5 (c), if we denote by $j$ the inversion map of $SU(2)$, we get the following
\end{remsdefs}

\begin{prop}\label{SU2xSU2}
The isometries of $(SU(2) \times SU(2), - (\mathcal{K}_2 \times \mathcal{K}_2))$ are precisely the maps of the form:

$(L_{A_1} \circ R_{A_2}) \times  (L_{B_1} \circ R_{B_2})$;

$(L_{A_1} \circ R_{A_2} \circ j) \times (L_{B_1} \circ R_{B_2})$;

$(L_{A_1} \circ R_{A_2})  \times (L_{B_1} \circ R_{B_2}  \circ j)$;

$(L_{A_1} \circ R_{A_2} \circ j) \times (L_{B_1} \circ R_{B_2}  \circ j)$;

$\big( (L_{A_1} \circ R_{A_2}) \times  (L_{B_1} \circ R_{B_2}) \big) \circ \sigma$;

$\big( (L_{A_1} \circ R_{A_2} \circ j) \times (L_{B_1} \circ R_{B_2}) \big) \circ \sigma$;

$\big( (L_{A_1} \circ R_{A_2})  \times (L_{B_1} \circ R_{B_2}  \circ j) \big) \circ \sigma$;

$\big( (L_{A_1} \circ R_{A_2} \circ j) \times (L_{B_1} \circ R_{B_2}  \circ j) \big) \circ \sigma$

where $A_1, A_2, B_1, B_2$ are generic elements of $SU(2)$.

In particular the isometries of $(SU(2) \times SU(2), - (\mathcal{K}_2 \times \mathcal{K}_2))$, fixing the identity $(I_2, I_2)$, are the previous ones, with $A_1^*= A_2$ and $B_1^*= B_2$.
\end{prop}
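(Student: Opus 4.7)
The plan is to combine the de Rham splitting recalled in part (e) of Remarks--Definitions~\ref{Cayley} with the explicit classification of the isometries of $(SU(2),-\mathcal{K}_2)$ furnished by Theorem~\ref{teor-gen-n}(c). Since both inputs are already available, the proof is essentially a bookkeeping exercise: no geometric step looks to be a real obstacle, and the only care required is in translating the maps $X\mapsto CXD$ and $X\mapsto CX^{*}D$ into the $L_{C}\circ R_{D}$/$j$ language of the statement and then enumerating all $2\times 2\times 2=8$ combinations correctly.

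First I would rewrite the isometries of $(SU(2),-\mathcal{K}_2)$ in a form adapted to the proposition. By Theorem~\ref{teor-gen-n}(c) they are exactly the maps $X\mapsto CXD$ and $X\mapsto CX^{*}D$ with $C,D\in SU(2)$; and because $X^{*}=X^{-1}$ for every $X\in SU(2)$, these two families coincide with $L_{C}\circ R_{D}$ and $L_{C}\circ R_{D}\circ j$, respectively, where $j$ denotes the inversion of $SU(2)$. I would then invoke the de Rham statement recalled in part (e): every isometry of $(SU(2)\times SU(2),-(\mathcal{K}_2\times\mathcal{K}_2))$ has the form $\psi_{1}\times\psi_{2}$ or $(\psi_{1}\times\psi_{2})\circ\sigma$, with $\psi_{1},\psi_{2}$ isometries of $(SU(2),-\mathcal{K}_2)$. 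Substituting the two possible shapes of $\psi_{1}$ and of $\psi_{2}$ in each of the two outer cases yields exactly the eight expressions listed in the proposition, once the parameters $(C,D)$ of $\psi_{1}$ are relabelled $(A_{1},A_{2})$ and those of $\psi_{2}$ are relabelled $(B_{1},B_{2})$.

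Finally, for the identity-fixing part, I would observe that $\sigma(I_{2},I_{2})=(I_{2},I_{2})$, so the condition $F(I_{2},I_{2})=(I_{2},I_{2})$ reduces, in all eight cases, to the two independent requirements $\psi_{1}(I_{2})=I_{2}$ and $\psi_{2}(I_{2})=I_{2}$. For $\psi_{1}=L_{A_{1}}\circ R_{A_{2}}$ this gives $A_{1}A_{2}=I_{2}$, i.e.\ $A_{2}=A_{1}^{-1}=A_{1}^{*}$; for $\psi_{1}=L_{A_{1}}\circ R_{A_{2}}\circ j$ one finds $A_{1}\cdot I_{2}^{-1}\cdot A_{2}=A_{1}A_{2}=I_{2}$, which yields the same relation $A_{1}^{*}=A_{2}$. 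The analogous computation on the second factor forces $B_{1}^{*}=B_{2}$, and this gives the concluding sentence of the proposition.
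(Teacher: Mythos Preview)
Your proposal is correct and follows exactly the route indicated by the paper: the proposition is stated there as an immediate consequence of Remarks--Definitions~\ref{Cayley}(e) (the de Rham splitting) combined with Theorem~\ref{teor-gen-n}(c), and your argument simply spells out this combination and the identity-fixing computation in detail.
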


\begin{prop}\label{covering}
Let $\pi: SU(2) \times SU(2) \to \dfrac{SU(2) \times SU(2)}{\{ \pm (I_2, I_2)\}}$ be the natural covering projection. 

If $\Psi$ is an isometry of $(\dfrac{SU(2) \times SU(2)}{\{ \pm (I_2, I_2)\}}, -\mathcal{K}')$, fixing the identity of the group, then there exists a unique isometry $\widetilde{\Psi}$ of $((SU(2) \times SU(2)), - (\mathcal{K}_2 \times \mathcal{K}_2))$, fixing the identity $(I_2, I_2)$, such that $\Psi \circ \pi = \pi \circ \widetilde{\Psi}$.

Conversely, if $\widetilde{\Psi}$ is an isometry of $((SU(2) \times SU(2)), - (\mathcal{K}_2 \times \mathcal{K}_2))$, fixing the identity $(I_2, I_2)$, then there exists a unique isometry $\Psi$ of $(\dfrac{SU(2) \times SU(2)}{\{ \pm (I_2, I_2)\}}, -\mathcal{K}')$, fixing the identity of the group, such that $\Psi \circ \pi = \pi \circ \widetilde{\Psi}$.

\end{prop}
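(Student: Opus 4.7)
The plan is to derive the first part from the classical lifting theorem for coverings and the second from the explicit classification furnished by Proposition \ref{SU2xSU2}. Throughout, I will use that $\pi$ is not only a topological covering but also a local isometry, since the metric $-\mathcal{K}'$ on the quotient is, by construction, the one that makes $\pi$ a local isometry with respect to $-(\mathcal{K}_2\times\mathcal{K}_2)$ upstairs.

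\smallskip

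\textbf{Lifting an isometry from the quotient.} Assume $\Psi$ is an isometry of the quotient fixing the identity class. Since $SU(2)\cong S^3$, the product $SU(2)\times SU(2)$ is simply connected, so the smooth map $\Psi\circ\pi$ admits, by the classical lifting criterion, a unique continuous lift $\widetilde{\Psi}: SU(2)\times SU(2) \to SU(2)\times SU(2)$ with $\pi\circ\widetilde{\Psi}=\Psi\circ\pi$ and $\widetilde{\Psi}(I_2,I_2)=(I_2,I_2)$. Because $\pi$ is a local diffeomorphism, $\widetilde{\Psi}$ is smooth; because $\pi$ is a local isometry and $\Psi$ a global one, $\widetilde{\Psi}$ is a local isometry. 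Applying the same construction to $\Psi^{-1}$ produces a basepoint-preserving lift $\widetilde{\Psi^{-1}}$, and uniqueness of such lifts forces $\widetilde{\Psi^{-1}}\circ\widetilde{\Psi}=Id=\widetilde{\Psi}\circ\widetilde{\Psi^{-1}}$. Thus $\widetilde{\Psi}$ is bijective, hence an isometry; uniqueness of $\widetilde{\Psi}$ is precisely the uniqueness of the lift.

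\smallskip

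\textbf{Descending an isometry to the quotient.} Given an isometry $\widetilde{\Psi}$ of $(SU(2)\times SU(2), -(\mathcal{K}_2\times\mathcal{K}_2))$ fixing $(I_2,I_2)$, Proposition \ref{SU2xSU2} (with the isotropy conditions $A_1^*=A_2$, $B_1^*=B_2$) writes it as $\psi_1\times\psi_2$ or $(\psi_1\times\psi_2)\circ\sigma$, where every $\psi_i$ is either an inner automorphism $X\mapsto A_iXA_i^{-1}$ or its composition with inversion $X\mapsto A_iX^{-1}A_i^{-1}$. In both cases $\psi_i(-X)=-\psi_i(X)$, so $\widetilde{\Psi}\big((-I_2,-I_2)\cdot p\big)=(-I_2,-I_2)\cdot\widetilde{\Psi}(p)$ for every $p$. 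Therefore $\pi\circ\widetilde{\Psi}$ is constant on the fibres of $\pi$, and factors uniquely as $\Psi\circ\pi$ for a set-theoretic map $\Psi$ of the quotient fixing the identity class. Smoothness and the local-isometry property of $\Psi$ follow from expressing $\Psi$ locally as $\pi\circ\widetilde{\Psi}\circ s$ for any smooth local section $s$ of $\pi$; since $\Psi$ is clearly bijective (its inverse is obtained from $\widetilde{\Psi}^{-1}$ by the same construction), it is a global isometry. Uniqueness is immediate from the surjectivity of $\pi$.

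\smallskip

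\textbf{Where the work is.} The delicate step is the descent: one must check that any isometry of the product merely fixing $(I_2,I_2)$ respects the antipodal identification $(X,Y)\sim(-X,-Y)$. This is not obvious a priori and is the main obstacle; it succeeds only because the explicit list of Proposition \ref{SU2xSU2} shows that each factor of $\widetilde{\Psi}$ is built from inner automorphisms and inversion, both of which commute with multiplication by $-I_2$ in $SU(2)$.
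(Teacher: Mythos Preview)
Your argument is correct and follows essentially the same route as the paper: the lift comes from simple connectivity of $SU(2)\times SU(2)$ together with $\pi$ being a local isometry, and the descent comes from checking, via the explicit classification of isometries fixing the identity (Proposition~\ref{SU2xSU2}, which the paper invokes through Theorem~\ref{teor-gen-n}(c) and Remarks-Definitions~\ref{Cayley}(e)), that every such isometry commutes with the deck transformation $\mu:(A,B)\mapsto(-A,-B)$. Your write-up is simply more explicit about bijectivity of the lift and about smoothness of the descended map via local sections.
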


\begin{proof}
Let $\Psi$ be an isometry of $(\dfrac{SU(2) \times SU(2)}{\{ \pm (I_2, I_2)\}}, -\mathcal{K}')$, fixing the identity of the group. Since $SU(2) \times SU(2)$ is simply connected, there exists a unique homeomorphism 

$\widetilde{\Psi}: SU(2) \times SU(2) \to SU(2) \times SU(2)$, fixing the identity$(I_2, I_2)$ and such that $\Psi \circ \pi = \pi \circ \widetilde{\Psi}$. Since $\pi$ is a local isometry from $(SU(2) \times SU(2), - (\mathcal{K}_2 \times \mathcal{K}_2))$ onto $(\dfrac{SU(2) \times SU(2)}{\{ \pm (I_2, I_2)\}}, -\mathcal{K}')$, the map $\widetilde{\Psi}$ is an isometry of $(SU(2) \times SU(2), - (\mathcal{K}_2 \times \mathcal{K}_2))$.

For the converse, we denote by $\mu$ the isometry of $(SU(2) \times SU(2), - (\mathcal{K}_2 \times \mathcal{K}_2))$ defined by $\mu (A, B) = (-A, -B)$. From Theorem \ref{teor-gen-n} (c) and from Remarks-Definitions \ref{Cayley} (e), the map $\mu$ commutes with all isometries $\widetilde{\Psi}$ of $(SU(2) \times SU(2), - (\mathcal{K}_2 \times \mathcal{K}_2))$, fixing the identity of the group, and so, all these last project as isometries of the quotient.
\end{proof}

\begin{thm}\label{isom-SO-4}
The isometries of $(S \mathcal{O}(4), - \mathcal{K})$ are precisely the following maps:

$X \to AXB$, 

$X \to AX^{T}B$,

$X \to A \, \tau(X)B$, 

$X \to A \, \tau(X)^{T}B$, 

where $A,B$ are matrices both in $S\mathcal{O}(4)$ or both in $\mathcal{O}(4) \setminus S\mathcal{O}(4)$.
\end{thm}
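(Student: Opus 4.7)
The plan is to classify the isometries of $(S\mathcal{O}(4), -\mathcal{K})$ by transferring, via the Lie group isomorphism $\chi$ of Remarks-Definitions \ref{Cayley}(d) together with Proposition \ref{covering}, the problem to a classification on $SU(2) \times SU(2)$ already carried out in Proposition \ref{SU2xSU2}. For the easy direction I would check that each listed map is an isometry: since $-\mathcal{K} = 2\phi$ on $S\mathcal{O}(4)$ (noted at the start of this section) and the Frobenius metric on $M_4(\mathbb{R})$ is invariant under left and right multiplication by $\mathcal{O}(4)$, the families $X \mapsto AXB$ and $X \mapsto AX^T B$ are isometries whenever $A, B \in \mathcal{O}(4)$ with $\det(A)\det(B) = 1$; and $\tau$ is an isometry because, via $\chi$, it corresponds to $\mathrm{Id} \times j$ on $SU(2) \times SU(2)$ (read off from Remarks-Definitions \ref{Cayley}(c)), which is an isometry by Lemma \ref{prime-isom}.

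For the converse, I would first reduce to isometries fixing $I_4$ by writing any isometry $F$ as $L_{F(I_4)}$ composed with one that fixes the identity. Such identity-fixing isometries transfer through $\chi$ to identity-fixing isometries of the quotient and, by Proposition \ref{covering}, lift uniquely to identity-fixing isometries $\widetilde F$ of $(SU(2) \times SU(2), -(\mathcal{K}_2 \times \mathcal{K}_2))$, which are enumerated in Proposition \ref{SU2xSU2} in eight families: four ``without $\sigma$'' and four ``with $\sigma$'', the latter arising from the swap of the two $SU(2)$-factors.

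The core step is then to push each of the eight families back through $\chi$ to identify which of the four shapes in the statement it produces. The bookkeeping uses: the commutativity of $\rho(SU(2))$ and $J\rho(SU(2))J$; the formula $\rho(X)^T = \rho(X^*)$; the identification of the swap $\sigma$ with conjugation by $J$ on $S\mathcal{O}(4)$, i.e. $\widehat\chi(\sigma(X,Y)) = J\,\widehat\chi(X,Y)\,J$, which follows from the previous two facts; and, crucially, the Cayley-factorization identity $\tau(JZJ)^T = J\tau(Z)J$. Combining these, the four ``no-$\sigma$'' families produce $Z \mapsto AZA^{-1}$, $A\tau(Z)A^{-1}$, $AZ^T A^{-1}$, $A\tau(Z)^T A^{-1}$ with $A \in S\mathcal{O}(4)$, while the four ``with-$\sigma$'' families produce the same four shapes with coefficients of the form $AJ$ and $JA^{-1}$, both in $\mathcal{O}(4) \setminus S\mathcal{O}(4)$. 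Re-composing with the previously stripped left translation $L_{F(I_4)}$ absorbs the extra factor into the left coefficient and yields precisely the four families in the statement.

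The hard part will be controlling the $\sigma$-twisted $\tau$-cases: since $\tau$ does not commute with conjugation by $J$, a naive computation could appear to introduce a fifth shape. The identity $\tau(JZJ)^T = J\tau(Z)J$ is exactly what is needed to collapse these twisted cases back into the shapes $A\tau(Z)B$ and $A\tau(Z)^TB$ with determinant-$(-1)$ coefficients, matching the statement.
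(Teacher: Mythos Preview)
Your proposal is correct and follows essentially the same route as the paper: reduce to isometries fixing $I_4$, transport through $\chi$ and Proposition~\ref{covering} to the identity-fixing isometries of $SU(2)\times SU(2)$ listed in Proposition~\ref{SU2xSU2}, and push the eight families back to $S\mathcal{O}(4)$. The only notable difference is organizational: the paper computes the five transfer identities $\chi\circ(C_A\times C_B)\circ\chi^{-1}=C_{\widehat\chi(A,B)}$, $\chi\circ(id\times j)\circ\chi^{-1}=\tau$, $\chi\circ(j\times id)\circ\chi^{-1}=\tau\circ\widehat j$, $\chi\circ(j\times j)\circ\chi^{-1}=\widehat j$, and $\chi\circ\sigma\circ\chi^{-1}=C_J$ once and for all, so the $\sigma$-cases fall out immediately as $C_M$ with $M\in\mathcal{O}(4)\setminus S\mathcal{O}(4)$, without needing your auxiliary identity $\tau(JZJ)^T=J\tau(Z)J$ (which is true and equivalent to the combination of their identities).
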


\begin{proof}
By Propositions \ref{SU2xSU2} and \ref{covering}, all isometries of $(\dfrac{SU(2) \times SU(2)}{\{ \pm (I_2, I_2)\}}, -\mathcal{K}')$, fixing the identity, are obtained by projecting onto the quotient the following isometries of 

$((SU(2) \times SU(2)), - (\mathcal{K}_2 \times \mathcal{K}_2))$: 

$C_A \times C_B  $, \ $(C_A \times C_B)\circ (j \times id)$, \ $(C_A \times C_B) \circ (id \times j)$, \ $(C_A \times C_B) \circ (j \times j)$, \ $(C_A \times C_B) \circ \sigma$, \ $(C_A \times C_B) \circ (j \times id) \circ \sigma$, \ $(C_A \times C_B) \circ (id \times j) \circ \sigma$, \ $(C_A \times C_B) \circ (j \times j) \circ \sigma$,

with $A,B \in SU(2)$. Here $id$ and $j$ denote, respectively, the identity and the inversion map of $SU(2)$, whereas $C_X$ denotes, as usual, the inner automorphism of $SU(2)$ associated to a generic element $X$ of $SU(2)$.

By Remarks-Definitions \ref{Cayley} (d), the isometries of $(S \mathcal{O}(4), - \mathcal{K})$, fixing the identity $I_4$, are of the form $\chi \circ \Phi \circ \chi^{-1}$, where $\Phi$ is one of the above isometries of $(\dfrac{SU(2) \times SU(2)}{\{ \pm (I_2, I_2)\}}, -\mathcal{K}')$.

Standard computations show that $\chi \circ (C_A \times C_B) \circ \chi^{-1}= C_{_{\widehat{\chi}(A,B)}}$, for every $A, B \in SU(2)$, $\chi \circ (id \times j)\circ  \chi^{-1}= \tau$ (and so, $\tau$ is an isometry of $(S \mathcal{O}(4), - \mathcal{K})$), $\chi \circ (j \times id) \circ \chi^{-1} = \tau \circ \widehat{j} = \widehat{j} \circ \tau$, $\chi \circ (j \times j) \circ \chi^{-1} = \widehat{j}$, where  $\widehat{j}$ denotes the inversion map of $S\mathcal{O}(4)$, and $\chi \circ \sigma \circ \chi^{-1} = C_J$, where $J$ is the matrix of $\mathcal{O}(4) \setminus S \mathcal{O}(4)$, defined in Remarks-Definitions \ref{Cayley} (b). From this, we get that the complete list of the isometries of $(S \mathcal{O}(4), - \mathcal{K})$, fixing the identity $I_4$, is the following:
$C_M$, \ $C_M  \circ \widehat{j} \circ \tau$, \ $C_M \circ \tau$, \ $C_M \circ \widehat{j}$, 
where $M$ is a generic matrix of $\mathcal{O}(4)$.

To get the full group of isometry of $(S \mathcal{O}(4), - \mathcal{K})$, it suffices to compose these isometries with a left translation $L_A$, with $A \in S\mathcal{O}(4)$. This allows to conclude the proof.
\end{proof}

\begin{rem}
The full group of isometries of $(S\mathcal{O}(4), -\mathcal{K})$ has $8$ connected components, all diffeomorphic to $\dfrac{S\mathcal{O}(4) \times S\mathcal{O}(4)}{\{\pm(I_4, I_4)\}}$.
\end{rem}

\begin{rem}
Also Theorem \ref{teor-gen-n} can be compared with the analogous result, obtained in \cite[Thm.\,1]{AAH2013}, for $n=4$..
\end{rem}

\section{Isometries of $U(n)$}\label{caso-U-n}

In this Section we describe the full group of isometries of the Riemannian manifold $(U(n), \phi)$ ($n \ge 2$), where $\phi$ is  the Frobenius metric of $U(n)$, defined by $\phi_A(X, Y) = -tr(A^* X A^* Y)$, for every $A \in U(n)$ and for every $X, Y \in T_A(U(n))$. By the way, note that $\phi$ can be also obtained, by the Frobenius metric ${\phi}_0$ of  $S\mathcal{O}({2n})$, as $\phi = \dfrac{1}{2} \, \rho^*({\phi}_0)$, where $\rho$ is the decomplexification map of $U(n)$ into $S\mathcal{O}({2n})$.

\begin{remsdefs}\label{richiamo-Frob} \ \\
a) The pair $(SU(n) \times \mathbb{R}, p)$, where $p$ is the map $p: SU(n) \times \mathbb{R} \to U(n)$, defined by $p(B, x)= e^{{\bf i}x} B$, is the (analytic) universal covering group of $U(n)$.

Indeed $p$ is clearly an analytic homomorphism of Lie groups, whose differential, at the point $(B, x) \in SU(n) \times \mathbb{R}$, maps the tangent vector $(W, \lambda)$ to \ $e^{{\bf i}x} (W+ {\bf i} \lambda B)$. At the identity $(I_n, 0)$, this map has kernel zero and so it is an isomorphism, hence, by \cite[Prop.\,3.26 p.\,100]{Warner1983}, it is a covering map.

b) From (a), we get easily that, if $\mathcal{K}$ and $\widehat{\mathcal{K}}$ are the Killing tensors of $U(n)$ and of $SU(n) \times \mathbb{R}$, respectively, then we have: $p^*(\mathcal{K}) = \widehat{\mathcal{K}}$. Since $\widehat{\mathcal{K}}$ is the product of the Killing tensors of $SU(n)$ and of $\mathbb{R}$ (and this last is zero), then, remembering again \cite[Ex.\,6.19 p.\,129]{Sepa2007}, we have: 

$\widehat{\mathcal{K}}_{(B, x)}((W, \lambda), (W', \lambda')) = 2n \, tr(B^* W B^*W')$, 

for every $B \in SU(n)$, for every $W, W' \in T_B(SU(n))$ and for every $x, \lambda,\lambda' \in \mathbb{R}$.

Let $A:= e^{{\bf i}x} B = p(B,x)$ (with $B \in SU(n)$ and $x \in \mathbb{R}$). If $Y, Z \in T_{A}(U(n))$, then, by (a), $Y$ and $Z$ are the images, through the tangent map of $p$, of 

$\big( e^{-{\bf i} x}Y - \dfrac{tr(A^*Y)}{n}B, - \dfrac{{\bf i}}{n} tr(A^*Y) \big)$ and of $\big( e^{-{\bf i} x}Z - \dfrac{tr(A^*Z)}{n}B, - \dfrac{{\bf i}}{n} tr(A^*Z) \big)$, respectively (note that $tr(A^*Y)$ and $tr(A^*Y)$ are purely imaginary, because $A^*Y$ and $A^*Z$ are skew-hermitian matrices).
Since $p^*(\mathcal{K}) = \widehat{\mathcal{K}}$, we get that 

\smallskip

$\mathcal{K}_A(Y, Z) =  \widehat{\mathcal{K}}_{(B, x)} \big(  \big( e^{-{\bf i} x}Y - \dfrac{tr(A^*Y)}{n}B, - \dfrac{{\bf i}}{n} tr(A^*Y) \big),  \big( e^{-{\bf i} x}Z - \dfrac{tr(A^*Z)}{n}B, - \dfrac{{\bf i}}{n} tr(A^*Z) \big)  \big)$

$= 2n \, tr \big( B^*( e^{-{\bf i} x}Y - \dfrac{tr(A^*Y)}{n}B)\ B^*(e^{-{\bf i} x}Z - \dfrac{tr(A^*Z)}{n}B) \big) =$

$2n\, \big( tr (A^*Y A^*Z) -\dfrac{1}{n}tr(A^*Y)tr(A^*Z)\big)= 
2n \, tr (A^*Y A^*Z) - 2\, tr(A^*Y)tr(A^*Z) =$

$-2n \, \phi_A (Y, Z) - 2\, tr(A^*Y)tr(A^*Z)$. 

Therefore we can state the following
\end{remsdefs}

\begin{lemma}\label{espressione-K-U-n}
The Killing tensor $\mathcal{K}$ of $U(n)$ has the following expression:

$\mathcal{K}_A(Y, Z) = 2n \, tr (A^*Y A^*Z) - 2\, tr(A^*Y)tr(A^*Z) = -2n \, \phi_A (Y, Z) - 2\, tr(A^*Y)tr(A^*Z)$

for every $A \in U(n)$ and every $Y, Z \in T_A(U(n))$. 
\end{lemma}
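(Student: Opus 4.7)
The plan is to deduce this formula as a direct consequence of the lifting to the universal covering $p: SU(n)\times \mathbb{R} \to U(n)$ described in Remarks-Definitions \ref{richiamo-Frob}. Since $p$ is a local diffeomorphism, one has $p^*(\mathcal{K}) = \widehat{\mathcal{K}}$, so it suffices to compute $\widehat{\mathcal{K}}_{(B,x)}$ on preimages of $Y$ and $Z$ under the tangent map $dp_{(B,x)}$, and to transport the result back to $A = e^{\mathbf{i}x}B$. Moreover, since the Killing tensor of $\mathbb{R}$ is zero, $\widehat{\mathcal{K}}$ reduces to the Killing tensor of $SU(n)$, which, by the formula cited from \cite{Sepa2007}, is $\widehat{\mathcal{K}}_{(B,x)}((W,\lambda),(W',\lambda')) = 2n\, tr(B^*WB^*W')$.

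The first step is to invert the differential $dp_{(B,x)}(W,\lambda) = e^{\mathbf{i}x}(W + \mathbf{i}\lambda B)$. Given $Y \in T_A(U(n))$, solve $e^{\mathbf{i}x}(W + \mathbf{i}\lambda B) = Y$ subject to the constraint $tr(B^*W) = 0$ (which encodes $W \in T_B(SU(n))$). Multiplying by $e^{-\mathbf{i}x}B^*$ and taking traces fixes $\mathbf{i}\lambda n = tr(A^*Y)$, hence $\lambda = -\tfrac{\mathbf{i}}{n}tr(A^*Y)$, and then $W = e^{-\mathbf{i}x}Y - \tfrac{tr(A^*Y)}{n}B$. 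This is exactly the preimage formula already displayed in Remarks-Definitions \ref{richiamo-Frob}(b). The same decomposition applies to $Z$.

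The second step is a direct substitution. Using the identity $e^{-\mathbf{i}x}B^* = A^*$, one gets
\[
B^*W = A^*Y - \tfrac{tr(A^*Y)}{n}I_n, \qquad B^*W' = A^*Z - \tfrac{tr(A^*Z)}{n}I_n.
\]
Expanding the product and taking the trace collapses two of the four terms against the third, yielding
\[
tr(B^*WB^*W') = tr(A^*YA^*Z) - \tfrac{1}{n}tr(A^*Y)\,tr(A^*Z).
\]
Multiplying by $2n$ gives the first claimed expression $2n\,tr(A^*YA^*Z) - 2\,tr(A^*Y)\,tr(A^*Z)$ for $\mathcal{K}_A(Y,Z)$, and the second form follows at once from the definition $\phi_A(Y,Z) = -tr(A^*YA^*Z)$ recalled in Remark \ref{anche-Frob}.

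There is essentially no obstacle in this proof: all conceptual work (the existence of the covering, the expression of the Killing form of $SU(n)$, and the explicit inverse of $dp$) has been carried out in the preceding remarks. The only point requiring mild attention is checking that the candidate $(W,\lambda)$ indeed lies in $T_{(B,x)}(SU(n)\times\mathbb{R})$, i.e.\ that $A^*Y$ being skew-Hermitian forces $tr(A^*Y)$ to be purely imaginary so that $\lambda \in \mathbb{R}$; this is immediate from $A \in U(n)$ and $Y \in T_A(U(n))$.
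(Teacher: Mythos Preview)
Your proposal is correct and follows essentially the same approach as the paper: the argument in Remarks-Definitions~\ref{richiamo-Frob}(b) is precisely the computation you describe, namely pulling back $\mathcal{K}$ via the covering $p$ to the product Killing tensor on $SU(n)\times\mathbb{R}$, inverting $dp$ to obtain the preimages of $Y,Z$, and expanding $2n\,tr(B^*WB^*W')$. Your write-up simply makes the intermediate simplification $B^*W = A^*Y - \tfrac{1}{n}tr(A^*Y)I_n$ explicit, which is a minor presentational difference rather than a different method.
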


\begin{rem}
The Killing tensor $\mathcal{K}$ of $U(n)$ is a (degenerate) negative semi-definite tensor (and so $U(n)$ is not semi-simple). 

It suffices to check it at the identity $I_n \in U(n)$. 
For, by Lemma \ref{espressione-K-U-n}, we have 

$\mathcal{K}_{I_n}({\bf i} I_n, {\bf i} I_n) = 0$ and furthermore, if $Y$ is a skew-hermitian matrix with purely imaginary eigenvalues ${\bf i} y_1, \dots , {\bf i} y_n$, then

$\mathcal{K}_{I_n}(Y, Y)=  -2n \sum_{j=1}^n y_j^2 + \sum_{h, j = 1}^n  2y_h y_j \le -2n \sum_{j=1}^n y_j^2 + \sum_{h, j=1}^n (y_h^2 + y_j^2) = 0$.
\end{rem}

\begin{remdef}\label{matrica-H}
On the product manifold $SU(n) \times \mathbb{R}$, we consider the metric $\mathcal{H}$ defined in the following way: 

$\mathcal{H}_{(B, x)} \big( (W, \lambda), (W', \lambda') \big) = - tr (B^* W B^* W')+ n \lambda \, \lambda'$, \ for every $B \in SU(n)$, for every $W, W' \in T_B (SU(n))$ and for every $x, \lambda, \lambda' \in \mathbb{R}$. 

Note that the metric $\mathcal{H}$ is the product of a constant positive multiple of the Killing metric of $SU(n)$ and of a constant positive multiple of the euclidean metric of $\mathbb{R}$.

By \cite[Thm.\,III p.\,341]{deRham1952}, the isometries of $(SU(n) \times \mathbb{R}, \mathcal{H} )$ are precisely the maps of the form \ $\Phi \times \alpha$, where $\Phi$ is an isometry  of $SU(n)$, endowed with its Killing metric, and $\alpha$ is an euclidean isometry of $\mathbb{R}$.
\end{remdef}

\begin{lemma}\label{isom-loc-U-n} \ \\
a) The map $p: (SU(n) \times \mathbb{R}, \mathcal{H}) \to (U(n), \phi)$ is a local isometry.

b) For every isometry $F$ of $(U(n), \phi)$, fixing the identity $I_n$ of $U(n)$, there is a unique isometry $\widehat{F}$ of $(SU(n) \times \mathbb{R}, \mathcal{H})$, fixing the the identity $(I_n, 0)$ of $SU(n) \times \mathbb{R}$, such that $p \circ \widehat{F} = F \circ p$. 
\end{lemma}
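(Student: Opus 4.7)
For part (a), the plan is to verify the identity $p^*\phi = \mathcal{H}$ pointwise by direct substitution. Using the explicit formula $dp_{(B,x)}(W,\lambda) = e^{\mathbf{i}x}(W + \mathbf{i}\lambda B)$ from Remarks-Definitions \ref{richiamo-Frob} (a), I would compute $\phi_{p(B,x)}(dp(W,\lambda),dp(W',\lambda'))$ starting from the defining expression $\phi_A(Y,Z) = -tr(A^*YA^*Z)$ with $A = e^{\mathbf{i}x}B$. After the substitution, all cross terms collapse to multiples of $tr(B^*W)$ and $tr(B^*W')$, which vanish because $B^*W, B^*W' \in \mathfrak{su}(n)$ are traceless; what remains is exactly $-tr(B^*WB^*W') + n\lambda\lambda'$, i.e.\ the definition of $\mathcal{H}$ from Remark-Definition \ref{matrica-H}. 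Since $dp$ has trivial kernel at the identity (Remarks-Definitions \ref{richiamo-Frob} (a)) and $p$ is a group homomorphism, $dp$ is an isomorphism everywhere and the pointwise metric preservation gives the local isometry property.

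For part (b), the plan is the standard lift-of-isometries-to-the-universal-cover argument. Because $SU(n) \times \mathbb{R}$ is simply connected and $p$ is a covering map (Remarks-Definitions \ref{richiamo-Frob} (a)), the lifting criterion produces a unique continuous map $\widehat{F}: SU(n) \times \mathbb{R} \to SU(n) \times \mathbb{R}$ with $p \circ \widehat{F} = F \circ p$ and $\widehat{F}(I_n,0) = (I_n,0)$; this prescription is admissible since $(I_n,0) \in p^{-1}(I_n) = p^{-1}(F(I_n))$. Smoothness of $\widehat{F}$ is automatic because $p$ is a local diffeomorphism and $F \circ p$ is smooth.

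To see that $\widehat{F}$ is a diffeomorphism, I would apply the same lifting procedure to $F^{-1}$, obtaining $\widehat{F^{-1}}$ fixing $(I_n,0)$; then both $\widehat{F} \circ \widehat{F^{-1}}$ and the identity of $SU(n) \times \mathbb{R}$ are lifts of the identity of $U(n)$ fixing $(I_n, 0)$, so they coincide by the uniqueness part of the lifting criterion (and symmetrically for the other composition). Finally, the chain of pullback identities
\[
\widehat{F}^*\mathcal{H} = \widehat{F}^* p^*\phi = (p \circ \widehat{F})^*\phi = (F \circ p)^*\phi = p^* F^*\phi = p^*\phi = \mathcal{H},
\]
which uses part (a) twice and the fact that $F$ is an isometry, shows that $\widehat{F}$ is an isometry of $(SU(n) \times \mathbb{R}, \mathcal{H})$.

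I do not anticipate a substantive obstacle: the only mildly delicate point is selecting the preimage of $F(I_n) = I_n$ at which to normalize the lift (there are countably many candidates), but this is handled automatically by the pointed version of the lifting criterion. The formal calculation in part (a) and the uniqueness-of-lifts manipulations in part (b) are both routine.
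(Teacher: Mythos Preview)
Your proposal is correct and follows essentially the same route as the paper: part (a) is the same direct computation of $p^*\phi$ using the differential from Remarks-Definitions \ref{richiamo-Frob} (a) and the tracelessness of $B^*W$, $B^*W'$, and part (b) is the standard universal-cover lifting argument, which the paper dispatches in a single sentence while you spell out the uniqueness-of-lifts and pullback details.
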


\begin{proof}
If $x, \lambda, \lambda' \in \mathbb{R}$, $B \in SU(n)$, $W, W' \in T_B(SU(n))$ (so $tr(B^*W)= tr(B^*W')=0$),
by Remarks-Definitions \ref{richiamo-Frob} (a), we have: 

$p^*(\phi)_{(B, x)} \big( (W, \lambda), (W', \lambda') \big) = \phi_{(e^{{\bf i}x}B)}\big( e^{{\bf i}x}(W+ {\bf i} \lambda B), e^{{\bf i}x}(W'+ {\bf i} \lambda' B) \big)=$

$-tr \big( (B^*W + {\bf i} \lambda I_n)(B^*W' + {\bf i} \lambda' I_n) \big)=-tr(B^*WB^*W') + n \lambda \lambda' = \mathcal{H}_{(B, x)} \big( (W, \lambda), (W', \lambda') \big)$, i.e. $p^*(\phi) = \mathcal{H}$ and the proof of (a) is complete.

Part (b) follows from part (a) and from the fact that $(SU(n) \times \mathbb{R}, p)$ is the universal covering of $U(n)$.
\end{proof}

\begin{prop}\label{SU-2-identity} \ \\
a) Every isometry of $(SU(2) \times \mathbb{R}, \mathcal{H})$, fixing  the identity $(I_2, 0)$ of $SU(2) \times \mathbb{R}$, projects (through the covering map $p$) as an isometry of $(U(2), \phi)$, fixing the identity $I_2$ of $U(2)$.

b) The isometries of $(U(2), \phi)$, fixing the identity $I_2$ of $U(2)$, are precisely the following maps:

$X \to BXB^*$, $X \to BX^*B^*$, $X \mapsto \dfrac{BXB^*}{\det(X)}$ and $X \to\det(X) BX^*B^*$, with $B \in SU(2)$.
\end{prop}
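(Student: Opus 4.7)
The plan is to combine the de~Rham splitting of Remark-Definition \ref{matrica-H}, the classification of Killing isometries of $SU(2)$ from Theorem \ref{teor-gen-n}(c), and the covering-space machinery of Lemma \ref{isom-loc-U-n}. First I would write a generic isometry $\widehat{F}$ of $(SU(2) \times \mathbb{R}, \mathcal{H})$ fixing $(I_2, 0)$ as $\widehat{F} = \Phi \times \alpha$. The isotropy condition forces $\Phi(I_2) = I_2$ and $\alpha(0) = 0$, so by Theorem \ref{teor-gen-n}(c) the map $\Phi$ is either $X \mapsto AXA^*$ or $X \mapsto AX^*A^*$ for some $A \in SU(2)$, while $\alpha(x) = \pm x$. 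This gives exactly four families to analyze.

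For part (a), I would verify that each such $\widehat{F}$ descends through $p$. The kernel of $p$ is the discrete subgroup $\{((-1)^k I_2, k\pi) : k \in \mathbb{Z}\}$, so the deck transformations act by $(B, x) \mapsto ((-1)^k B, x+k\pi)$. In each of the four cases a short computation gives $\Phi((-1)^k B) = (-1)^k \Phi(B)$ and $\alpha(x+k\pi) = \alpha(x) \pm k\pi$, hence
\[
p\bigl(\widehat{F}((-1)^k B, x+k\pi)\bigr) \;=\; (-1)^k e^{\pm i k\pi}\, e^{i\alpha(x)} \Phi(B) \;=\; e^{i\alpha(x)} \Phi(B) \;=\; p(\widehat{F}(B, x)),
\]
since $(-1)^k e^{\pm i k \pi} = 1$. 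Thus $\widehat{F}$ permutes the fibers of $p$ and descends to a well-defined self-map of $U(2)$ fixing $I_2$, which is an isometry because $p$ is a local isometry by Lemma \ref{isom-loc-U-n}(a).

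For part (b), Lemma \ref{isom-loc-U-n}(b) says every isometry of $(U(2), \phi)$ fixing $I_2$ admits a unique lift to an isometry of $(SU(2) \times \mathbb{R}, \mathcal{H})$ fixing $(I_2, 0)$; together with part (a) this yields a bijection between the two sets. It therefore suffices to project the four families listed above through the formula $p(B, x) = e^{ix}B$. Writing $X = p(B, x)$, so that $B = e^{-ix}X$ and $\det(X) = e^{2ix}$, and substituting into $p(\widehat{F}(B, x)) = e^{i\alpha(x)}\Phi(B)$, the four cases yield respectively $X \mapsto AXA^*$, \ $X \mapsto \dfrac{AXA^*}{\det(X)}$, \ $X \mapsto \det(X)\,AX^*A^*$ and \ $X \mapsto AX^*A^*$, matching the four maps in the statement (with parameter $B:=A\in SU(2)$).

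The main technical point, rather than a serious obstacle, is verifying that these projected formulas are independent of the choice of lift $(B, x)$ of a given $X \in U(2)$; this is precisely what part (a) secures, the cancellation $(-1)^k e^{\pm i k\pi} = 1$ being the key identity that makes the bookkeeping of the two-sheeted covering work uniformly in all four cases.
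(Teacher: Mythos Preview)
Your argument is correct and follows essentially the same route as the paper: both proofs invoke the de~Rham splitting of Remark-Definition \ref{matrica-H}, classify the identity-fixing factor on $SU(2)$ via Theorem \ref{teor-gen-n}(c), check that each of the four resulting maps descends through $p$, and then appeal to Lemma \ref{isom-loc-U-n} to see that nothing is missed. Your version is a bit more explicit about the deck-transformation bookkeeping (the paper simply asserts ``easy computations''); one small slip is calling the covering ``two-sheeted'' at the end, whereas $p:SU(2)\times\mathbb{R}\to U(2)$ has kernel isomorphic to $\mathbb{Z}$ and so is infinite-sheeted---but this does not affect the argument.
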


\begin{proof}
We denote by $id$ the identity map of $\mathbb{R}$, by $Id$ the identity map of $SU(2)$, by $j$ the inversion map of $SU(2)$ and by $C_B = L_B \circ R_{B^*}$ the inner automorphism of $SU(2)$, associated to $B$. By Remark-Definition \ref{matrica-H} and by Theorem \ref{teor-gen-n} (c), the isometries of $(SU(2) \times \mathbb{R}, \mathcal{H})$, fixing the the identity $(I_2, 0) \in SU(2) \times \mathbb{R}$, are precisely the maps of the form:

$C_B \times (\pm id) = (C_B \times id) \circ (Id \times (\pm id))$ and 

$(C_B \circ j) \times (\pm id) = (C_B \times id) \circ (j \times (\pm id))$, 

with $B \in SU(2)$.

Easy computations show that all the maps $C_B \times id$ (with $B \in SU(2)$), $Id \times id$, $Id \times (-id)$, $j \times id$ and $j \times (-id)$ project as maps of $U(2)$.
More precisely, $C_B \times id$ \ projects as the inner automorphism of $U(2)$ associated to $B$, 
$Id \times id$ as the identity map of $U(2)$, 
$Id \times (-id)$ as the involution of $U(2)$ given by $A \mapsto \dfrac{A}{\det(A)}$, 
\ $j \times id$ as the involution of $U(2)$ given by $A \mapsto \det(A) A^*$, and $j \times (-id)$ as the inversion of $U(2)$.

By composition, the maps of $U(2)$, obtained in this way, are those in the statement. They are isometries of $(U(2), \phi)$, by  Lemma \ref{isom-loc-U-n} (a). Part (b) of the same Lemma implies that there is no other isometries.
\end{proof}

\begin{thm}\label{isom-U-2}
The isometries of $(U(2), \phi)$ are precisely the following maps:

$X \to AXB$, 

$X \to AX^*B$, 

\smallskip

$X \mapsto \dfrac{AXB}{\det(X)}$,

\smallskip

$X \to\det(X) AX^*B$, 

with $A,B \in U(2)$.
\end{thm}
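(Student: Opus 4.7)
The plan is to leverage Proposition \ref{SU-2-identity}(b), which already classifies the isometries of $(U(2), \phi)$ that fix the identity, together with the bi-invariance of $\phi$ on $U(2)$ (as discussed at the beginning of Section \ref{caso-U-n} and in Remark \ref{anche-Frob}). Since $\phi$ is bi-invariant, for every $a \in U(2)$ the left translation $L_a$ is an isometry of $(U(2), \phi)$, so every isometry can be written as a product of a left translation by $F(I_2)$ and an isometry fixing $I_2$.

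For the forward direction, let $F$ be an arbitrary isometry of $(U(2), \phi)$ and set $A_0 := F(I_2) \in U(2)$. Then $G := L_{A_0^{-1}} \circ F$ is an isometry of $(U(2), \phi)$ fixing $I_2$, so by Proposition \ref{SU-2-identity}(b) it has one of the four forms $X \mapsto BXB^*$, $X \mapsto BX^*B^*$, $X \mapsto \frac{BXB^*}{\det(X)}$, or $X \mapsto \det(X) BX^*B^*$, with $B \in SU(2)$. Composing with $L_{A_0}$ yields $F$, and in each case we obtain one of the four forms in the statement, taking $A := A_0 B \in U(2)$ on the left and $B^* \in SU(2) \subset U(2)$ on the right.

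For the reverse direction, I need to verify that each of the four families in the statement, with $A, B \in U(2)$, consists of isometries. The key observation is that every $B \in U(2)$ admits a decomposition $B = \lambda B_0$ with $\lambda \in U(1)$ and $B_0 \in SU(2)$: indeed, pick $\lambda \in U(1)$ with $\lambda^2 = \det(B)$ and set $B_0 := \lambda^{-1} B$. Using this, the map $F(X) = AXB$ factors as $F(X) = (\lambda A B_0) \cdot (B_0^* X B_0)$, where $L_{\lambda A B_0}$ is an isometry by bi-invariance and $X \mapsto B_0^* X B_0 = (B_0^*)X(B_0^*)^*$ is an identity-fixing isometry by Proposition \ref{SU-2-identity}(b); hence $F$ is an isometry. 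The same decomposition turns $X \mapsto AX^*B$ into $L_{\lambda A B_0} \circ (X \mapsto B_0^* X^* B_0)$, and similarly handles $X \mapsto \frac{AXB}{\det(X)}$ and $X \mapsto \det(X) AX^*B$ by exploiting that $\det(\lambda B_0) = \lambda^2$ is absorbed into the scalar factors.

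The proof is essentially a careful bookkeeping that reconciles the $SU(2)$-parameters of Proposition \ref{SU-2-identity}(b) with the broader $U(2)$-parameters in the statement; no serious obstacle arises. The only point requiring attention is the scalar absorption $B = \lambda B_0$, and one must also verify that the outputs of the last two families indeed lie in $U(2)$ (which follows because $|\det(X)| = 1$ for $X \in U(2)$).
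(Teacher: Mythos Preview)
Your proof is correct and takes essentially the same approach as the paper, whose own proof is the one-line observation that the result follows from Proposition \ref{SU-2-identity}(b) by composing with left (or right) translations by matrices of $U(2)$. Your reverse direction is more elaborate than needed---the decomposition $B=\lambda B_0$ and the remark about $\det(\lambda B_0)=\lambda^2$ can be bypassed by noting directly that $L_A$ and $R_B$ are isometries for all $A,B\in U(2)$ by bi-invariance, so each of the four families is just $L_A\circ R_B$ composed with the corresponding identity-fixing isometry from Proposition \ref{SU-2-identity}(b) taken with parameter $I_2$---but the argument is sound.
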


\begin{proof}
It follows directly from Proposition \ref{SU-2-identity} (b), by left (or right) translation with a matrix of $U(2)$.
\end{proof}

\begin{thm}\label{isom-U-n}
The isometries of $(U (n), \phi)$, with $n \ge 3$,  are precisely the following maps: 

$X \to AXB$, 

$X \to AX^{*}B$, 

$X \to A\overline{X}B$,

$X \to AX^{T}B$, 

with $A, B \in U(n)$.
\end{thm}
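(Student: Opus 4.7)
The plan is to mirror the strategy used for $(U(2),\phi)$ in Proposition \ref{SU-2-identity} and Theorem \ref{isom-U-2}: lift an identity-fixing isometry to $SU(n)\times\mathbb{R}$ via the covering $p$, decompose it by de Rham, and then determine which lifts descend back to $U(n)$. The new feature compared to $n=2$ is that, for $n\ge 3$, only half of the identity-fixing isometries of $SU(n)\times\mathbb{R}$ project; the final list contains exactly four types, corresponding to the two families of Lie automorphisms of $SU(n)$ (the inner ones and those of the form $Y\mapsto C\overline YC^*$) and their compositions with the inversion map.

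First I would dispose of the easy direction: each of the four listed maps is an isometry of $(U(n),\phi)$, since $\phi$ is the restriction of the Frobenius metric on $M_n(\mathbb C)$, which is preserved by $X\mapsto X^*$, $X\mapsto\overline X$, $X\mapsto X^T$ and by left and right multiplication by unitary matrices. For the converse, setting $A:=F(I_n)$ and replacing $F$ with $L_{A^{-1}}\circ F$ reduces to the case $F(I_n)=I_n$. By Lemma \ref{isom-loc-U-n}(b), $F$ lifts uniquely to an isometry $\widehat F$ of $(SU(n)\times\mathbb R,\mathcal H)$ fixing $(I_n,0)$. Because $\mathcal H$ is the Riemannian product of a positive multiple of the Killing metric of $SU(n)$ (an irreducible symmetric factor) and a positive multiple of the Euclidean metric of $\mathbb R$, the de Rham theorem recalled in Remark-Definition \ref{matrica-H} forces $\widehat F=\Phi\times\alpha$, with $\Phi$ an identity-fixing isometry of $SU(n)$ endowed with its Killing metric, and $\alpha=\pm\mathrm{id}_{\mathbb R}$. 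By Theorem \ref{teor-gen-n}(d), $\Phi$ is one of the four maps $Y\mapsto BYB^*$, $Y\mapsto BY^*B^*$, $Y\mapsto B\overline Y B^*$, $Y\mapsto BY^TB^*$ with $B\in SU(n)$, yielding exactly eight candidate lifts.

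The crucial step is to detect which candidates descend through $p$. A direct calculation gives
\[
\ker p=\{(e^{-2k\pi i/n}I_n,\,2k\pi/n):k\in\mathbb Z\},
\]
a central (hence normal) subgroup of $SU(n)\times\mathbb R$. The map $\widehat F=\Phi\times\alpha$ descends iff, for every $(g,y)\in\ker p$ and every $(B,x)\in SU(n)\times\mathbb R$, the element $\widehat F((g,y)(B,x))\cdot\widehat F(B,x)^{-1}$ lies in $\ker p$. Each $g=e^{-iy}I_n$ is central in $SU(n)$, and this is decisive in the two anti-automorphism cases $\Phi=BY^*B^*$ and $\Phi=BY^TB^*$: the product $\Phi(B)\Phi(g)\Phi(B)^{-1}$ collapses to $\Phi(g)$, so the descent condition reduces to the purely numerical requirement $(\Phi(g),\alpha(y))\in\ker p$ for every $(g,y)\in\ker p$. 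One checks that $\Phi(g)=g$ when $\Phi$ is $C_B$ or $Y\mapsto BY^TB^*$, and $\Phi(g)=g^{-1}$ when $\Phi$ is $Y\mapsto BY^*B^*$ or $Y\mapsto B\overline Y B^*$. Using that for $n\ge 3$ the equality $e^{-2k\pi i/n}=e^{2k\pi i/n}$ already fails for $k=1$, exactly the four pairings
\[
(BYB^*,\mathrm{id}),\ (BY^TB^*,\mathrm{id}),\ (BY^*B^*,-\mathrm{id}),\ (B\overline Y B^*,-\mathrm{id})
\]
survive. Writing $X=e^{ix}B$ and evaluating $p\circ\widehat F$, these descend respectively to $X\mapsto BXB^*$, $X\mapsto BX^TB^*$, $X\mapsto BX^*B^*$, $X\mapsto B\overline X B^*$; composing with a left translation $L_A$ by a general $A\in U(n)$ recovers the four forms in the statement.

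The main obstacle is this descent analysis: one must exploit centrality of $\Phi(g)$ in the anti-automorphism cases to eliminate the $B$-dependence, and then extract the arithmetic condition that distinguishes $n\ge 3$ from $n=2$. The same bookkeeping also clarifies why the list is strictly shorter than in Theorem \ref{isom-U-2}: for $n=2$ the congruence $n\mid 2k$ holds for every $k\in\mathbb Z$, so all eight lifts descend and produce the extra isometries $X\mapsto AXB/\det X$ and $X\mapsto\det(X)AX^*B$, which are absent for $n\ge 3$.
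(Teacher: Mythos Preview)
Your proposal is correct and follows essentially the same route as the paper: reduce to identity-fixing isometries, lift through the covering $p:SU(n)\times\mathbb{R}\to U(n)$, split via de Rham and Theorem~\ref{teor-gen-n}(d) into eight candidates, and determine which four descend when $n\ge 3$. The only cosmetic difference is that the paper verifies (non-)descent by evaluating each candidate on the explicit pair of preimages $(I_n,0)$ and $(e^{2\pi{\bf i}/n}I_n,-2\pi/n)$ of $I_n$, whereas you phrase the same check as the abstract condition $\widehat F\big((g,y)(B,x)\big)\cdot\widehat F(B,x)^{-1}\in\ker p$; both arguments hinge on the fact that $e^{4k\pi{\bf i}/n}\ne 1$ for $k=1$ precisely when $n\ge 3$.
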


\begin{proof}
We use the same notations as in the proof of Proposition \ref{SU-2-identity} (with $n \ge 3$ generic, instead of $n=2$) and the further following: $\mu(X) = \overline{X}$ and $\eta(X) = X^T$ for every $X \in SU(n)$. Again, by Remark-Definition \ref{matrica-H} and by Theorem \ref{teor-gen-n} (d), the isometries of $(SU(n) \times \mathbb{R}, \mathcal{H})$, fixing the the identity $(I_n, 0)$ of $SU(n) \times \mathbb{R}$, are precisely the maps of the form:

$C_B \times (\pm id) = (C_B \times id) \circ (Id \times (\pm id))$, 

$(C_B \circ j) \times (\pm id) = (C_B \times id) \circ (j \times (\pm id))$,

$(C_B \circ \mu) \times (\pm id) = (C_B \times id) \circ (\mu \times (\pm id))$ and

$(C_B \circ \eta) \times (\pm id) = (C_B \times id) \circ (\eta \times (\pm id))$,

with $B \in SU(n)$.

Since the isometries of $(SU(n) \times \mathbb{R}, \mathcal{H})$, projecting (throughout $p$) as maps of $U(n)$, form a group with respect to the composition, it suffices to examine the following isometries: 

$C_B \times id$, which projects as the inner automorphism of $U(n)$, associated to $B \in SU(n)$,

$Id \times id$, which projects as the identity map of $U(n)$, 

$j \times (-id)$, which projects as the inversion map of $U(n)$,

$\mu \times (-id)$, which projects as the (complex) conjugation map of $U(n)$,

$\eta \times id$, which projects as the transposition map of $U(n)$,

and $Id \times (-id)$, \ $j \times id$, \ $\mu \times id$, \ $\eta \times (-id)$, which, on the contrary, do not project as maps of $U(n)$.
The proofs of the first five cases are obvious. For the isometries, which do not project as maps of $U(n)$, we consider, as example, only the case $Id \times (-id)$; the other cases can be treated in the same way.

For, we have $I_n = p(I_n, 0) = p(e^{\frac{2 \pi {\bf i}}{n}}I_n,\ - \dfrac{2 \pi}{n})$, \ \ $p \circ \big(Id \times (-id) \big)\ (I_n, 0) = I_n$ and 

\smallskip

$p \circ \big( Id \times (-id) \big)\ (e^{\frac{2 \pi {\bf i}}{n}}I_n,\ - \dfrac{2 \pi}{n}) = e^{\frac{4 \pi {\bf i}}{n}} I_n$; these last two are different, because $n \ge 3$ and so, the isometry $Id \times (-id)$ does not project as map of $U(n)$.

Therefore, taking into account Lemma \ref{isom-loc-U-n}, the isometries of $(U(n), \phi)$, fixing the identity $I_n$, are the following maps:  $X \to BXB^*$, $X \to BX^{*}B^*$,  $X \to B\overline{X}B^*$, $X \to BX^{T}B^*$, with $B \in SU(n)$. Now, by left (or right) translation with a matrix of $U(n)$, we obtain all the isometries in the statement.
\end{proof}

\begin{rems} \ \\
a) The full group of isometries of $(U(n), \phi)$, both for $n=2$ and for $n \ge 3$, has $4$ connected components, all diffeomorphic to 
$\dfrac{U(n) \times U(n)}{\{ \lambda(I_n, I_n) : \lambda \in \mathbb{C}, |\lambda|=1 \}}$.

\smallskip

Indeed, arguing as in Remark \ref{defU}, the group generated by left and right translations of $U(n)$ is diffeomorphic to $\dfrac{U(n) \times U(n)}{(Z \times Z) \cap \Delta}$, where $Z$ and $\Delta$ are, respectively, the center of $U(n)$ and the diagonal of $U(n) \times U(n)$. We conclude, because the center of $U(n)$ is $\{ \lambda I_n : \lambda \in \mathbb{C}, |\lambda|=1\}$.

b) For every $n \ge 2$, \, $(U(n), \phi)$ is a (globally) symmetric Riemannian manifold. 

Indeed, for every $A \in U(n)$, the map $X \mapsto AX^*A$ is an isometry of $(U(n), \phi)$, fixing $A$, and whose differential at $A$ is the opposite of the identity map of $T_A(U(n))$.
\end{rems}

\begin{rem}
Compare Theorems \ref{isom-U-2} and \ref{isom-U-n} with an analogous result, obtained in \cite[Thm.\,8]{HatMol2012}.
\end{rem}

\end{document}